\begin{document}

\dtitle{Mixing on Stochastic Staircase Transformations}

\dauthor[Darren Creutz]{Darren Creutz}{creutz@usna.edu}{US Naval Academy}

\datewritten{7 April 2020}
\dsubjclass{Primary 37A25; Secondary 28D05}

\dabstract{%
We prove mixing on a general class of rank-one transformations containing all known examples of rank-one mixing, including staircase transformations and Ornstein's constructions, and a variety of new constructions.
}

\makepreprint

{\allowdisplaybreaks
\section{Introduction}

%\subsection{Rank-One Transformations}
The study of rank-one transformations, particularly mixing rank-one transformations, is an active area of research \cite{DS04}, \cite{AP06}, \cite{ryz1}, \cite{ageev}, \cite{ryz2}, \cite{DJ08}, \cite{CS10}.  These transformations are in some sense the simplest constructive class complex enough to include examples of various mixing properties.  Originally constructed by Ornstein \cite{Or72} to provide examples of mixing transformations with no roots, rank-one transformations are now a source of examples and counterexamples for many mixing-related properties.

%\subsection{Mixing on Rank-One Transformations}
Recently, in \cite{CS04} and \cite{CS10}, an approach to proving mixing on rank-one transformations was developed involving a sort of bootstrapping process.  First one shows that a certain sequence (the spacer sequence) is ergodic for all ergodic transformations and deduces from this that the transformation is weakly mixing.  In turn this is used to show that a certain family of sequences (the partial sums) are ergodic with respect to the transformation leading to a large set on which mixing occurs.  This process continues in a similar back and forth, culminating in mixing.

%\subsection{Randomly Generated Rank-One Transformations and Mixing}
Ornstein's original construction of mixing transformations used uniformly distributed random variables so that almost surely the resulting transformations are mixing.  Adams \cite{Ad98} showed that a deterministic class, a subset of the staircase transformations, are also mixing and the author and Silva \cite{CS10} extended this result to all staircases.  Ornstein's class has been extended in various ways, for instance in \cite{El00} and \cite{AP06} to larger classes and in \cite{Da06} to various abelian group actions.

%\subsection{Stochastic Staircase Transformations}
We introduce the class of stochastic staircase transformations, a broad generalization of Ornstein's random constructions which includes the deterministic staircase transformations and all other known examples of rank-one mixing transformations, and our main result is the following:
\begin{theorem*}[Theorems \ref{T:rsmix} and \ref{T:rsmix2}]
Almost every stochastic staircase transformation is mixing.
\end{theorem*}
This provides a large class of mixing rank-one transformations including, in addition to those just mentioned, the random staircase transformations, the random analogue of the staircase construction with positive density spacer sequences.

%\subsection{Mixing Properties and Ergodicity of Stochastic Sequences}
The main new ingredient in our work here is a new result about a strong type of uniform ergodicity for a general type of randomly generated sequences (Theorem \ref{T:P}).  Lema\'{n}czyk, Lesigne, Parreau, Voln\'{y} and Wierdl \cite{leman} proved that for a very large class of sequences--sequences determined by the behavior of measurable functions under measure-preserving transformations, a class which includes the sequences we study here--the mean ergodic theorem holds along such sequences for every ergodic transformation.  Using their result as the replacement for the mean ergodic theorem, and our new result on the uniform ergodicity behavior of stochastic sequences for the later steps, we follow the approach of \cite{CS10} to show that stochastic staircase transformations are mixing.

This uniform ergodicity of sequences is actually a spectral result about sequences and rank-one transformations.  The spectral behavior of rank-one transformations has been the subject of much study, notably the result of Klemes and Reinhold \cite{klemes} that a class of rank-one transformations have singular spectra, as well as the results in \cite{bourgain93}, \cite{klemes96}, \cite{abda07} and \cite{aist12}.  Mixing rank-one transformations are known to be mixing of all orders \cite{Ka84}, \cite{Ry93} and so results on the singularity of the spectrum of rank-one transformations connect to the result of Host \cite{host} that mixing implies mixing of all orders when the spectrum is singular.  As rank-one transformations are known to have simple spectra and the necessity of mixing for Lebesgue spectra is obvious, the class of mixing rank-one transformations is of interest as a potential source of examples of solutions to the well-known Banach problem on the existence of transformations with simple Lebesgue spectrum.

%\subsection {Acknowledgments.}  
\textbf{Acknowledgements.}  The author would like to thank C.~Dodd and B.~Robinson for their contributions during early investigation of this problem and also to thank C.~Silva.  The author would also like to thank the referees on initial drafts of the paper for many helpful suggestions, especially the suggestion of exact references and regarding the organization of the paper and for various proof simplifications.
This paper is based in part on 
research conducted during the 2004 SMALL Undergraduate Summer
Research Project at Williams College.  Support for the project was provided by a National Science 
Foundation REU Grant and the Bronfman Science Center of Williams 
College.

\section{Preliminaries}
\subsection{Dynamical Systems}
A probability space $(X,\Sigma,\mu)$ together with a measure-preserving,
invertible transformation $T: X \mapsto X$ form a \textbf{dynamical 
system} $(X,\Sigma,\mu,T)$.  The term \textbf{transformation} will refer 
exclusively to such $T$.
A transformation $T$ is \textbf{ergodic} if for any 
$A\in\Sigma$, if $T(A)=A$ then
$\mu(A)\mu(A^{C})=0$. The \textbf{ mean (von Neumann) ergodic theorem} states that
a transformation $T$ is ergodic if and only if for any $B\in\Sigma$,
\[
\lim_{N\to\infty}\int
\big{|}\frac{1}{N}\sum_{n=0}^{N-1}\bbone_{B}\circ T^{-n} - 
\mu(B)\big{|}^{2}d\mu 
= 0
\]
where $\bbone_{B}$ represents the indicator function of the set $B$.
A transformation $T$ is \textbf{totally ergodic} when for any 
$\ell\in\mathbb{N}$, the transformation $T^{\ell}$ is ergodic.
 A transformation $T$ is \textbf{mixing} when for any 
$A,B\in\Sigma$,
\[
\lim_{n\to\infty}\mu(T^{n}(A) \cap B) - \mu(A)\mu(B) = 0.
\]

\subsection{Spectral Measures}

For $T$ a transformation and $B\in\Sigma$ let $\sigma_{T,B}$ be the probability measure on $S^{1}$ (the unit circle) with Fourier coefficients, for $\ell \in \mathbb{Z}$,
\[
\widehat{\sigma_{T,B}}(\ell) = \frac{\mu(T^{\ell}(B) \cap B) - \mu(B)\mu(B)}{\mu(B)(1-\mu(B))}.
\]
\textbf{Spectral measure} shall mean such a measure.  The reader is referred to Chapters 9 and 11 of \cite{lemanbook} for the spectral theory of rank-one transformations.

Note that $T$ is ergodic if and only if for any spectral measure arising from $T$ we have 
\[
\int \big{|} \frac{1}{N}\sum_{n=1}^{N}z^{n}\big{|}^{2} d\sigma(z) \to 0
\]
by the mean ergodic theorem,
hence if and only if $\sigma(\{1\}) = 0$ (for $z \ne 1$ we have $\frac{1}{N}\sum_{n=0}^{N-1}z^{n} = \frac{1 - z^{N}}{N(1 - z)} \to 0$), and that $T$ is mixing if and only if $\widehat{\sigma}(n) \to 0$.  Other mixing-like properties behave similarly.

\subsection{Power Ergodicity}
Introduced in \cite{CS04} and \cite{CS10}, power ergodicity involves the 
power of a transformation being uniformly ergodic in the sense that 
the ergodic averages for each power of the transformation converge 
uniformly to zero. 

\begin{definition}
A transformation $T$ is \textbf{weakly power ergodic} when for any spectral measure $\sigma$ arising from $T$,
\[
\lim_{N\to\infty}\sup_{1 \leq k \leq N}\int\big{|}\frac{1}{N}\sum_{n=1}^{N}z^{nk}\big{|}^{2}d\sigma(z) = 0.
\]
\end{definition}
 This is called weak power ergodicity as there is also the variant where the supremum over $k$ runs over all nonzero integers but we will not need that property here.

\subsection{Dynamical Sequences}

The notion of partial sums of a sequence, introduced by the author and Silva in
\cite{CS04} 
in connection with showing mixing for rank-one transformations, involves a family of sequences generated from a given 
sequence as follows:

\begin{definition}
Let $\{a_{n}\}$ be a sequence and $k\in\mathbb{N}$.  The 
\textbf{$k^{th}$ partial sums} of
$\{a_{n}\}$, denoted $\{a_{n}^{(k)}\}$, are given by
\[
a_{n}^{(k)} = a_{n} + a_{n+1} + \ldots + a_{n+k-1} =
\sum_{z=0}^{k-1} a_{n+z}.
\]
\end{definition}

Dynamical sequences are the natural 
representation of the spacer sequences for rank-one transformations 
(see section \ref{S:rankones}).
\begin{definition}
A \textbf{dynamical sequence} $\{s_{n,j}\}_{\{r_{n}\}}$ is a 
doubly-indexed collection of nonnegative integers $s_{n,j}$ for 
$n\in\mathbb{N}$ and $j\in\{0,\ldots,r_{n}-1\}$ where $\{r_{n}\}$ is a 
sequence, called the index sequence of the dynamical sequence.
\end{definition}
\begin{definition}
Let $\{s_{n,j}\}_{\{r_{n}\}}$ be a dynamical sequence and 
$k\in\mathbb{N}$.  The \textbf{$k^{th}$ partial sums} of 
$\{s_{n,j}\}_{\{r_{n}\}}$, written $\{s_{n,j}^{(k)}\}_{\{r_{n}-k\}}$, are given by
\[
s_{n,j}^{(k)} = s_{n,j} + s_{n,j+1} + \cdots + s_{n,j+k-1} = \sum_{z=0}^{k-1}s_{n,j+z}
\]
\end{definition}
The motivation for this definition comes from looking at the spacers of rank-one transformations and the times at which levels intersect themselves nontrivially: for a level in the $n^{th}$ column, this will occur precisely at the time $kh_{n} + s_{n,j}^{(k)}$ for $0 \leq j+k < r_{n}$ (see Section \ref{S:rankones} for details).

\subsection{Mixing Properties on Sequences}

\begin{definition}
Let $T$ be a transformation.  A 
sequence $\{a_{n}\}$ is \textbf{ergodic} with respect 
to $T$ when for any spectral measure $\sigma$ arising from $T$,
\[
\lim_{N\to\infty}
\int\big{|}\frac{1}{N}\sum_{n=1}^{N}z^{a_{n}}\big{|}^{2}d\sigma(z) = 0.
\]
\end{definition}

The following definitions are the spectral versions of the definitions 
found in \cite{CS10} (stated for sequences rather than dynamical 
sequences as in that paper).

\begin{definition}
Let $T$ be a transformation.  A 
sequence $\{a_{n}\}$ is \textbf{totally ergodic} with respect 
to $T$ when for any fixed $k\in\mathbb{N}$ and any spectral measure $\sigma$ arising from $T$,
\[
\lim_{N\to\infty}
\int\big{|}\frac{1}{N}\sum_{n=1}^{N}z^{a_{n}^{(k)}}\big{|}^{2}d\sigma(z) = 0.
\]
\end{definition}

The choice of the term totally ergodic here comes from the fact that in the case when $a_{n} = n$ we have $a_{n}^{(k)} = kn + \nicefrac{k(k+1)}{2}$ and so the condition is exactly the usual notion of total ergodicity.

\begin{definition}
Let $T$ be a transformation.  A 
sequence $\{a_{n}\}$ is \textbf{weakly power ergodic} with 
respect to $T$ when for any spectral measure $\sigma$ arising from $T$,
\[
\lim_{N\to\infty}\sup_{1 \leq k \leq N}
\int\big{|}\frac{1}{N}\sum_{n=1}^{N}z^{a_{n}^{(k)}}\big{|}^{2}d\sigma(z) = 0.
\]
\end{definition}

These properties carry over to dynamical sequences in a straightforward 
manner by replacing $a_{n}$ in the above definitions with $s_{n,j}$ and 
the $N$ in the fraction and sum with $r_{n}$; 
this is how they were originally defined in \cite{CS10} and \cite{CS04}:
\begin{definition}
Let $T$ be a transformation and $\{s_{n,j}\}_{\{r_{n}\}}$ be a 
dynamical sequence.  The sequence  is \textbf{ergodic} with respect 
to $T$ when for any spectral measure $\sigma$ for $T$,
\[
\lim_{n\to\infty}
\int\big{|}\frac{1}{r_{n}}\sum_{j=0}^{r_{n}-1}z^{s_{n,j}}\big{|}^{2}d\sigma(z) = 0.
\]
The sequence  is \textbf{totally ergodic} with respect 
to $T$ when for any spectral measure $\sigma$ for $T$ and fixed $k$,
\[
\lim_{n\to\infty}
\int\big{|}\frac{1}{r_{n}}\sum_{j=0}^{r_{n}-1}z^{s_{n,j}^{(k)}}\big{|}^{2}d\sigma(z) = 0.
\]
The sequence  is \textbf{weakly power ergodic} with respect 
to $T$ when for any spectral measure $\sigma$ for $T$,
\[
\lim_{n\to\infty} \sup_{1 \leq k < r_{n}}
\int\big{|}\frac{1}{r_{n}}\sum_{j=0}^{r_{n}-1}z^{s_{n,j}^{(k)}}\big{|}^{2}d\sigma(z) = 0.
\]

\end{definition}

\begin{definition}
Let $T$ be a transformation.  A sequence $\{ a_{n} \}$ is \textbf{mixing} with respect to $T$ when for any spectral measure arising from $T$,
\[
\lim_{n \to \infty} \widehat{\sigma}(n) = 0
\]
\end{definition}

\section{Rank-One Transformations}\label{S:rankones}

We recall now the construction of rank-one transformations, in particular staircase transformations, and some of the results from \cite{CS04} and \cite{CS10} that we will use to prove mixing for stochastic staircase transformations.

The construction of rank-one transformations is by \textbf{cutting and stacking}.
Begin with $[0,1)$, the only \textbf{level} in the initial \textbf{column}.  \textbf{Cut} it into
$r_{0}$ \textbf{sublevels}, pieces of equal length: $[0,\frac{1}{r_{0}})$, 
$[\frac{1}{r_{0}},\frac{2}{r_{0}})$, $\ldots$, 
$[\frac{r_{0}-1}{r_{0}},1)$.
Place an interval of the same length above
$[\frac{1}{r_{0}},\frac{2}{r_{0}})$, i.e., place 
$[1,\frac{r_{0}+1}{r_{0}})$ above
$[\frac{1}{r_{0}},\frac{2}{r_{0}})$.  Likewise, place $j$ \textbf{spacer} sublevels 
above piece $j$.  Now, \textbf{stack} the resulting subcolumns
from left to right by placing $[0,\frac{1}{r_{0}})$ at the bottom, 
$[\frac{1}{r_{0}},\frac{2}{r_{0}})$ above it, the spacer level above that,
$[\frac{2}{r_{0}},\frac{3}{r_{0}})$ above
the spacer and so on, ending with the topmost 
of the $r_{0}-1$ spacers.  This stack of $h_{1} = r_{0} + 
\sum_{j=0}^{r_{0}-1}j$
levels (of length $\frac{1}{r_{0}}$), the second column, defines a map 
$T_{0} : 
[0,1+\frac{1}{r_{0}}\sum_{j=0}^{r_{0}-1}j-\frac{1}{r_{0}}) \to 
[\frac{1}{r_{0}},1+\frac{1}{r_{0}}\sum_{j=0}^{r_{0}-1}j)$ by 
sending points directly up one level.

Repeat the process: cut the entire new column into $r_{1}$ subcolumns 
of equal width $\frac{1}{r_{0}r_{1}}$, preserving the stack map on each 
subcolumn;
place $j$ spacers (intervals not yet in the space the same width as the subcolumns) above each subcolumn ($j\in\{0,\ldots,r_{1}-1\}$); and stack
the resulting subcolumns from left to right.  Our new column defines a map $T_{1}$ that agrees with $T_{0}$ where it is defined and extends it to all
but the topmost spacer of the rightmost subcolumn.  Iterating this process leads to a transformation $T$ defined on all but a Lebesgue measure zero set.

The transformations obtained in this manner (placing $j$ spacer levels above the $j^{th}$ subcolumn at each stage) are called \textbf{staircase transformations}.  More generally, one may place $s_{n,j}$ spacers above the $j^{th}$ subcolumn at the $n^{th}$ stage in place of the $j$ spacers above.  A transformation created by \emph{cutting and stacking} as just 
described (with a single column resulting from each iteration) is a \textbf{
rank-one transformation}.  Rank-one transformations are measurable and 
measure-preserving under Lebesgue measure, and are completely defined by the
dynamical sequence $\{s_{n,j}\}_{\{r_{n}\}}$ where at the $n^{th}$ step we cut
into $r_{n}$ pieces and place $s_{n,j}$ spacers above subcolumn $j$ (for staircase transformations, $s_{n,j} = j$).  
This $\{s_{n,j}\}_{\{r_{n}\}}$ is the \textbf{spacer sequence} for the 
transformation
and $\{r_{n}\}$ is the \textbf{cut sequence}.  The \textbf{height sequence} 
$\{h_{n}\}$ is the number of levels in each column: $h_{0} = 1$ and $h_{n+1} = r_{n}h_{n} + 
\sum_{j=0}^{r_{n}-1}s_{n,j}$.  We include a proof of the well-known fact:
\begin{proposition}\label{P:partrigid}
Let $T$ be a rank-one transformation with spacer sequence $\{s_{n,j}\}_{\{r_{n}\}}$ such that $\liminf r_{n} < \infty$.  Then $T$ is partially rigid hence cannot be mixing.
\end{proposition}
\begin{proof}
Let $R = \liminf r_{n}$ and let $B$ be a level in $C_{0}$ and set $t_{n} = h_{n} - s_{n,0}$.  Then for any $n$ such that $r_{n} \leq R+1$, it holds that $\mu(T^{t_{n}}(B) \cap B) \geq \frac{1}{R+1}\mu(B)$.
\end{proof}

Adams showed that 
a class of staircase transformations 
are mixing (\cite{Ad98}) and the author and Silva (\cite{CS10}) extended 
that result to all staircases and also showed that the class of 
\textbf{polynomial staircase transformations} (those with spacers 
$\{s_{n,j}\}_{\{r_{n}\}}$ given by $s_{n,j} = p_{n}(j)$ where $p_{n}$ 
are polynomials of bounded degree) are mixing.
Earlier, Ornstein \cite{Or72} had shown that if $s_{n,j+1} = x_{n,j+1} - x_{n,j}$ where the $x_{n,j}$ are uniform on $[-h_{n-1},\ldots,h_{n-1}]$ and independent and if $r_{n} \to \infty$ sufficiently fast then almost surely the resulting transformation is mixing (a more complete description of the construction, and a proof that such transformations are mixing, can also be found in \cite{Na98} pages 177--200).

\begin{theorem}[\cite{CS04} Theorem 3]\label{T:r1hmix}
Let $T$ be a rank-one transformation with spacer sequence $\{s_{n,j}\}_{\{r_{n}\}}$.  Then $\{h_{n}\}$ is mixing with respect to $T$ if $\{s_{n,j}\}_{\{r_{n}\}}$ is 
ergodic with 
respect to $T$.
\end{theorem}

\begin{corollary}\label{C:r1te}
Let $T$ be a rank-one transformation with spacer sequence $\{s_{n,j}\}_{\{r_{n}\}}$.  Then $T$ is totally ergodic if $\{s_{n,j}\}_{\{r_{n}\}}$ is 
ergodic with 
respect to $T$.
\end{corollary}
\begin{proof}
By Theorem \ref{T:r1hmix}, $T$ has a mixing sequence hence $T$ is weakly mixing.  Being weakly mixing, $1$ is the only eigenvalue of $T$ and it is simple hence $T$ is totally ergodic.
\end{proof}

\begin{theorem}[\cite{CS10}]\label{T:r1pe}
Let $T$ be a rank-one transformation with spacer sequence $\{s_{n,j}\}_{\{r_{n}\}}$.  Then $T$ is weakly power ergodic if 
$\{s_{n,j}\}_{\{r_{n}\}}$ is totally ergodic 
with respect to $T$.
\end{theorem}
\begin{proof}
In \cite{CS10} this is proven in full generality though stated only for staircases (in that paper weak power ergodicity is also stated for $\frac{k}{N} \to 0$ but the proof works for $k \leq N$ as we describe now).  Proposition 5.8 in \cite{CS10} states: let $T$ be a rank-one transformation with spacer sequence $\{s_{n,j}\}_{\{r_{n}\}}$ and $k \in \mathbb{N}$ such that $\{s_{n,j}^{(k)}\}_{\{r_{n}-k\}}$ is ergodic with respect to $T$, then $\{kh_{n}\}$ is mixing with respect to $T$ ($\{h_{n}\}$ being the height sequence for $T$).

The Block Lemma of \cite{Ad98} (see \cite{CS04} Lemma 7.1) states that
\[
\big{|}\frac{1}{N}\sum_{n=1}^{N}z^{n}\big{|} \leq \big{|}\frac{1}{L}\sum_{\ell=1}^{L}z^{\ell p}\big{|} + \frac{pL}{N}
\]
for any $p,L,N \in \mathbb{N}$.  For any $k \leq N$ we can choose $p$ and $y$ such that $h_{p} \leq k < h_{p+1} \leq ky < 2h_{p+1}$ (the choice of $p$ is uniquely determined by $k$ since $h_{n}$ is increasing; then choose $y$ minimally such that $ky \geq h_{p+1}$) and then
\[
\big{|}\frac{1}{N}\sum_{n=1}^{N} z^{nk}\big{|} \leq \big{|}\frac{1}{L}\sum_{\ell=1}^{L}z^{\ell ky}\big{|} + \frac{yL}{N}
\]
Now $\ell h_{p+1} \leq \ell ky < 2\ell h_{p+1}$ is a mixing sequence for each fixed $\ell$: Proposition 5.8 in \cite{CS10} shows that $\{ \ell h_{n} \}$ is mixing and that $\{ (\ell + 1)h_{n} \}$ is mixing; the claim then follows from Proposition 5.9 in \cite{CS10} (using $\ell$ as the $k(n)$ and $J_{n}$ being $0$ since $\nicefrac{s_{p,j}^{(\ell+1)}}{h_{p}} \to 0$).  Using the Blum-Hanson trick (\cite{CS04} Lemma 4.11) we then see that
\[
\int \big{|}\frac{1}{L}\sum_{\ell=1}^{L}z^{\ell k_{n}y_{n}}\big{|}^{2} d\sigma(z) = \frac{1}{L} + 2Re\Big{[}\frac{1}{L}\sum_{\ell=1}^{L-1}\frac{L-\ell}{L}\widehat{\sigma}(\ell k_{n}y_{n})\Big{]}
\]
and $\widehat{\sigma}(\ell k_{n}y_{n}) \to 0$ as $n \to \infty$ for each fixed $\ell$ so this term goes to zero.  We also get
\[
\frac{yL}{N} \leq \frac{2yL}{k} = \frac{2yk}{k^{2}} \leq \frac{2h_{p+1}}{h_{p}^{2}} \approx \frac{2r_{p}}{h_{p}} \to 0
\]
(since the measure space must be finite) and therefore
\[
\lim_{N\to\infty} \sup_{k \leq N} \int \big{|}\frac{1}{N}\sum_{n=1}^{N}z^{nk}\big{|} d\sigma(z) = 0
\]
which shows weak power ergodicity.
\end{proof}

\begin{theorem}[\cite{CS04} Theorem 6]\label{T:rgmix}
Let $T$ be a rank-one transformation with spacer sequence $\{s_{n,j}\}_{\{r_{n}\}}$ and height sequence $\{h_{n}\}$ such that $T$ has \textbf{\emph{restricted growth}}: $\frac{r_{n}^{2}}{h_{n}} \to 0$.  If $\{s_{n,j}\}_{\{r_{n}\}}$ is weakly power ergodic with respect to $T$ then $T$ is mixing.
\end{theorem} 

Note that what we call weak power ergodicity is referred to as uniform ergodicity in \cite{CS04}.  The reader familiar with \cite{CS04} and \cite{CS10} will be aware that removing the restricted growth condition for staircases is a nontrivial task.  The next theorem is needed for the non-restricted-growth case; it is essentially a consequence of Proposition 5.11 in \cite{CS10} (see also Definition 5.10 in \cite{CS10}).

\begin{theorem}\label{T:mix1}
Let $T$ be a rank-one transformation with spacer sequence $\{s_{n,j}\}_{\{r_{n}\}}$ and height sequence $\{h_{n}\}$.    Assume that for any sequence $\{ k_{n} \}$ such that $k_{n} < r_{n}$, any sequence $Q_{n} \to \infty$, any partition $\{\Gamma_{n,q}\}_{q \leq Q_{n}}$ of $\{0, \ldots, r_{n}-k_{n}-1\}$ such that $\frac{1}{Q_{n}}\sum_{q}\#\Gamma_{n,q} \to \infty$ and $h_{n}^{-1} |s_{n,j}^{(k_{n})} - s_{n,j^{\prime}}^{(k_{n})}| \to 0$ uniformly over $j,j^{\prime} \in \Gamma_{n,q}$ and any sequence $\{ \alpha_{n,q} \}$ with $\alpha_{n,q} \leq \alpha_{n,q+1}$ and $\frac{\alpha_{n,Q_{n}}}{k_{n}} \to 0$ that
\[
\lim_{N\to\infty} \int \frac{1}{r_{n}-k_{n}}\sum_{q=1}^{Q_{n}} \big{|}\sum_{j\in\Gamma_{n,q}} z^{s_{n,j}^{(k_{n}-\alpha_{n,q})}} \big{|} d\sigma(z) = 0.
\]
Then $T$ is mixing.
\end{theorem}
\begin{proof}
Let $t_{n} \to \infty$ be any sequence.  Proposition 5.11 in \cite{CS10} states that:

\begin{quote}
Set $p(n)$ and $k(n)$ 
such that $h_{p(n)} \leq t_{n} < h_{p(n)+1}$ and $k(n)h_{p(n)} \leq t_{n} 
< (k(n)+1)h_{p(n)}$.  If there exists a sequence $\{L_{n}\}$ such that $\frac{L_{n}}{h_{p_{n}}} \frac{1}{r_{p_{n}}}\sum_{j=0}^{r_{p_{n}}-1}s_{p_{n},j} \to 0$ and $L_{n} \to \infty$ that also has the property that every sequence of partitions that respects the spacing arrangement of $T$ with block sizes $\{\frac{h_{p_{n}}}{L_{n}}\}$ is \textbf{\emph{ergodic}} with respect to $T$, meaning
\[
\lim_{n\to\infty}\sum_{q=0}^{Q_{n}-1}\int\big{|}\frac{1}{r_{p(n)}}\sum_{j\in\Gamma_{n,q}}\chi_{B}\circ T^{-s_{p(n),j}^{(k(n)+2-\alpha(n, q))}}\big{|}d\mu = 0,
\]
then $\{t_{n}\}$ is mixing with respect to $T$.
\end{quote}

Definition 5.10 in \cite{CS10} states that

\begin{quote}
%Fix $T$ a rank-one transformation with spacer sequence $\{s_{n,j}\}_{\{r_{n}\}}$.
%Let $Q, p, k$ be positive integers and $\{ \Gamma_{q} \}_{q=0}^{Q-1}$ be a partition of $\mathbb{Z}_{r_{p}-k}$ (that is $\Gamma_{q} \subseteq \mathbb{Z}_{r_{p}-k}$ are disjoint and $\cup \Gamma_{q} = \mathbb{Z}_{r_{p}-k}$).  We say 
$(Q,p,k,\{\Gamma_{q}\})$ \textbf{\emph{respects the spacing arrangement at $k$}} of $T$ when $s_{p,j}^{(k)} \leq s_{p,j^{\prime}}^{(k)}$ for all $j \in \Gamma_{q}$ and $j^{\prime} \in \Gamma_{q^{\prime}}$ whenever $q \leq q^{\prime}$ and
%
%Given sequences $\{p_{n}\}$, $\{k_{n}\}$ and $\{Q_{n}\}$, 
a sequence of partitions $\{\Gamma_{n,q}\}_{Q_{n}}$ that each individually respect the spacing arrangement of $T$ at $k_{n}$ and also have the property that $\frac{Q_{n}}{r_{p_{n}}} \to 0$ is said to \textbf{\emph{respect the spacing arrangement}} for $T$ with \textbf{\emph{block sizes}} $\{b_{n}\}$ when $\#\Gamma_{n,q} \geq b_{n}$ for a density one set of $q$.
\end{quote}

The condition that $h_{n}^{-1} |s_{n,j}^{(k_{n})} - s_{n,j^{\prime}}^{(k_{n})}| \to 0$ uniformly over $j,j^{\prime} \in \Gamma_{n,q}$ ensures that the partitions respect the spacing arrangement.  The condition that $\frac{1}{Q_{n}}\sum_{q} \#\Gamma_{n,q} \to \infty$ provides the required $L_{n} = \frac{1}{Q_{n}}\sum_{q} \#\Gamma_{n,q}$.

The proposition then implies $\{ t_{n} \}$ is mixing, and as every increasing sequence is mixing this means $T$ is mixing.  Note: the converse is obviously true but we will not need this in the sequel.
\end{proof}

\section{Stochastic Staircase Transformations}

\subsection{Stochastically Generated Sequences}

We introduce the class of stochastically generated sequences and prove a series of facts about them, with the primary goal being Theorem \ref{T:P} stating that such sequences (almost surely) have a strong uniform ergodicity property.

\begin{definition}
Let $b$ an integer-valued random variable.  By Kolmogorov's Theorem there exists a probability space $(\Omega,P)$ with $b_{1},b_{2},\ldots : \Omega \to \mathbb{N}$ iid copies of $b$ and there exists a measurable map $\Theta : \Omega \to \Omega$ such that $b{n} \circ \Theta = b_{n+1}$ (usually referred to as a shift map).

Set $a_{n} = b_{1} + \cdots + b_{n}$.
Then $\{a_{n}\}$ is a (random) strictly increasing sequence in $\mathbb{N}$.  We call $\{a_{n}\} = \{a_{n}(\omega)\}$ a \textbf{stochastically generated sequence} and say that $\{a_{n}\}$ is \textbf{stochastically generated} by $b$ for a typical (generic) $\omega$.
\end{definition}

If we take $b$ to be a geometric variable with parameter $\alpha \in (0,1)$ (i.e. $P(b=n) = \alpha(1-\alpha)^{n-1}$ for $n\in\mathbb{N}$) then $\{a_{n}\}$ is a random sequence where each $m \in \{a_{n}\}$ (i.e. there exists $n$ such that $m = a_{n}$) independently with probability $\alpha$.  Such a sequence is often called a \textbf{randomly generated sequence} with density $\alpha$.

\begin{definition}
For an integer-valued random variable $b$, the \textbf{period} of $b$ is the largest $\ell \in \mathbb{N}$ such that $P(\ell \text{ divides } b) = 1$.  If $\ell = 1$ then $b$ is \textbf{aperiodic}.
\end{definition}

\subsection{Stochastically Generated Dynamical Sequences}

\begin{definition}
Let $\{b_{n}\}$ a sequence of (not necessarily iid) integer-valued random variables.  Let $(\Omega,P)$ be a probability space such that $\{b_{n,j}\}_{n,j\in\mathbb{N}}$ are independent random variables (independent over $j$; over $n$ they may interact) on $(\Omega,P)$ where the $b_{n,j}$ are iid copies of $b_{n}$ for each $n$.  Let $\{ r_{n} \}$ be a sequence of positive integers such that $r_{n} \to \infty$.
Set $s_{n,j} = b_{n,1} + \cdots + b_{n,j}$.  Then a typical $\{s_{n,j}\}_{\{r_{n}\}}$ is a \textbf{stochastically generated dynamical sequence} generated by $\{ b_{n} \}$ with cut sequence $\{ r_{n} \}$. 
\end{definition}

We will use the notation $\mathbb{E}$ to represent the expectation functional on $(\Omega,P)$ and reserve $\int \cdot~d\sigma$ for the integral functional with respect to the spectral measures.

\subsection{Stochastic Staircase Transformations}

\begin{definition}
Let $b$ be an aperiodic positive-integer-valued random variable with finite mean.  Let $\{ a_{n} \}$ be a sequence stochastically generated by $b$ and let $\{ r_{n} \}$ be a sequence of positive integers such that $r_{n} \to \infty$.  The rank-one transformation with spacer sequence $\{ s_{n,j} \}_{\{r_{n}\}}$ where $s_{n,j} = a_{j}$ is a \textbf{stochastic staircase transformation} generated by $b$ with cut sequence $\{ r_{n} \}$.
\end{definition}

If the random variable that generates the stochastic sequence is taken to be a geometric variable (so that the spacer sequence is randomly generated sequence) then the resulting transformation is called a \textbf{random staircase transformation}.  If the generating random variable is taken to be uniform the resulting transformations are similar to Ornstein's transformations (see section \ref{S:ornstein}).  If the generating random variables is taken to be identically one then the resulting transformations are simply the classical (non-stochastic) staircase transformations.

More generally, one can allow the base variable $b$ to take on negative values:
\begin{definition}
Let $b$ be an aperiodic integer-valued random variable with finite mean.  Let $\{ a_{n} \}$ be a sequence stochastically generated by $b$ and let $\{ r_{n} \}$ be a sequence of positive integers such that $r_{n} \to \infty$.  Let $\{ x_{n} \}$ be a sequence of nonnegative integers such that $x_{n} \geq - \inf_{\Omega} \inf_{j \leq r_{n}} a_{j}$.  Note that most often we will take $b \geq 0$ so $x_{n}$ may be taken to be zero.
The rank-one transformation with spacer sequence $\{ s_{n,j} \}_{\{r_{n}\}}$ where $s_{n,j} = a_{j} + x_{n}$ is a \textbf{stochastic staircase transformation} generated by $b$ with cut sequence $\{ r_{n} \}$ (and padding $\{ x_{n} \}$).
\end{definition}

In fact, one can use a triangular array of random variables, and even allow the cut parameter to be random (as long as it depends only on the previous steps):
\begin{definition}
More generally, let $\{ b_{n} \}$ be a sequence of (not necessarily iid) aperiodic integer-values random variables with finite mean (not necessarily uniformly bounded over $n$).  Let $\{ a_{n,j} \}$ be a dynamical sequence stochastically generated by the $b_{n}$ and let $\{ r_{n} \}$ be a sequence of positive integers such that each $r_{n}$ is a function of the $a_{m,j}$ for $m < n$ and such that $\mathbb{E}b_{n}$ is bounded by some polynomial in $r_{n}$.  The rank-one transformation with spacer sequence $\{ s_{n,j} \}_{\{r_{n}\}}$ where $s_{n,j} = a_{n,j} + x_{n}$ (where $\{ x_{n} \}$ is as above) is also referred to as a \textbf{stochastic staircase transformation} generated by $\{ b_{n} \}$ with cut sequence $\{ r_{n} \}$.
\end{definition}

Since we are concerned with mixing, we will need to ensure that when the cut sequence is random that it tends to infinity (see Proposition \ref{P:partrigid}).  We will also be concerned with ensuring that the transformations are defined on finite measure-spaces.  To this end, since the $\{ r_{n} \}$ are chosen without reference to the random variables:
\begin{proposition}

A stochastic staircase transformation with spacer sequence $\{s_{n,j}\}_{\{r_{n}\}}$ will be defined on a finite measure space a.s.~provided that
\[
\sum_{n} \Big{(} \frac{1}{r_{n}} \prod_{j=1}^{n-1}r_{j} \Big{)}^{-1} < \infty
\]
\end{proposition}
\begin{proof}
Proposition 8.1 in \cite{CS10} shows that the space being finite measure is equivalent to the condition that
\[
\sum_{n=1}^{\infty} \frac{\overline{s_{n}}}{h_{n}} < \infty
\]
where $\overline{s_{n}} = \frac{1}{r_{n}}\sum_{j=0}^{r_{n}-1} s_{n,j}$ is the average spacer height.  In the context of stochastic staircase transformations,
\[
\overline{s_{n}} = b_{1} + \frac{r_{n}-1}{r_{n}} b_{2} + \cdots + \frac{1}{r_{n}}b_{n}
\]
and since
\[
\sum_{j=0}^{r_{n}-1} \frac{1}{(j+1)^{2}} \textrm{Var}(\frac{r_{n}-j}{r_{n}}b_{j}) = \sum_{j=0}^{r_{n}-1} \frac{1}{(j+1)^{2}} \frac{(r_{n}-j)^{2}}{r_{n}^{2}} \textrm{Var}(b) \leq \sum_{j=0}^{r_{n}-1} \frac{1}{(j+1)^{2}} \textrm{Var}(b) \leq \frac{\pi^{2}}{6} \textrm{Var}(b) < \infty
\]
by Kolmogorov's Weighted Strong Law of Large Numbers (\cite{SS} Theorem 2.3.10),
\[
\lim_{n} \frac{\overline{s_{n}}}{r_{n}} = \mathbb{E}b
\]
almost surely.  Now
\[
h_{n+1} = r_{n}h_{n} + r_{n}\overline{s_{n}} \approx r_{n}h_{n} + r_{n}^{2}\mathbb{E}b = r_{n}h_{n}(1 + \frac{r_{n}}{h_{n}}\mathbb{E}b)
\]
and since $\frac{r_{n}}{h_{n}} \to 0$ this means $h_{n+1} \approx r_{n}h_{n}$
and therefore
$h_{n} \approx \prod_{j<n} r_{j}$
and so
\[
\sum \frac{\overline{s_{n}}}{h_{n}} = \sum \frac{\overline{s_{n}}}{r_{n}} \frac{r_{n}}{h_{n}} 
\approx (\mathbb{E}b) \sum \frac{r_{n}}{h_{n}} \approx (\mathbb{E}b) \sum \Big{(}\frac{1}{r_{n}}\prod_{j=1}^{n-1} r_{j}\Big{)}^{-1}
\]
\end{proof}

\section{Properties of Stochastically Generated Sequences}

\subsection{Basic Technical Properties}

\begin{lemma}\label{L:remarks}
For any $n,k,t$ and stochastic sequence $\{ a_{n} \}$, letting $\Theta$ be the shift and $a_{0} = 0$,
\begin{align*}
(i) \quad&a_{n+t}^{(k)} - a_{n}^{(k)} = a_{n+k}^{(t)} - a_{n}^{(t)}; \\
(ii) \quad&a_{n}^{(k)} = ka_{n} + a_{0}^{(k)} \circ \Theta^{n}; \\
(iii) \quad&a_{n+t} - a_{n} = a_{t} \circ \Theta^{n}; and \\
(iv) \quad&a_{n+t}^{(k)} - a_{n}^{(k)} = (a_{t}^{(k)} - a_{0}^{(k)}) \circ \Theta^{n}.
\end{align*}
\end{lemma}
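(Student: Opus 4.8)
The plan is to reduce all four identities to one elementary observation plus routine bookkeeping on finite sums; no genuinely probabilistic input is needed, only the defining relation $a_m = b_1 + \cdots + b_m$ (not independence, not identical distribution). The key observation is (iii) itself: since $\Theta$ shifts the sequence $(b_1, b_2, \ldots)$, we have $b_i \circ \Theta^n = b_{i+n}$, so $a_t \circ \Theta^n = b_{n+1} + \cdots + b_{n+t}$, which equals $a_{n+t} - a_n$ by telescoping (with the convention $a_0 = 0$ covering the case $n = 0$). I would prove (iii) first and then feed it into the other three identities.

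For (ii): expand $a_n^{(k)} = \sum_{z=0}^{k-1} a_{n+z}$, write each summand as $a_n + (a_{n+z} - a_n) = a_n + a_z \circ \Theta^n$ using (iii), and sum over $z$ to get $k a_n + (\sum_{z=0}^{k-1} a_z) \circ \Theta^n = k a_n + a_0^{(k)} \circ \Theta^n$. For (iv): in $a_{n+t}^{(k)} - a_n^{(k)} = \sum_{z=0}^{k-1}(a_{n+t+z} - a_{n+z})$ rewrite each term as $(a_{n+t+z} - a_n) - (a_{n+z} - a_n) = (a_{t+z} - a_z) \circ \Theta^n$ by (iii), then sum over $z$ to obtain $(\sum_{z=0}^{k-1} a_{t+z} - \sum_{z=0}^{k-1} a_z) \circ \Theta^n = (a_t^{(k)} - a_0^{(k)}) \circ \Theta^n$.

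For (i): applying the telescoping form of (iii) inside each summand gives $a_{n+t}^{(k)} - a_n^{(k)} = \sum_{z=0}^{k-1}\sum_{w=0}^{t-1} b_{n+z+w+1}$, and the same manipulation turns the right-hand side into $a_{n+k}^{(t)} - a_n^{(t)} = \sum_{w=0}^{t-1}\sum_{z=0}^{k-1} b_{n+w+z+1}$, which is the identical double sum with the order of summation interchanged. Equivalently, (i) is nothing but the polynomial identity $(S^t - I)(I + S + \cdots + S^{k-1}) = (S^k - I)(I + S + \cdots + S^{t-1})$ in the shift operator $S$ applied to the sequence $(a_n)_{n \geq 0}$, and it can also be deduced from (iv) together with its $k \leftrightarrow t$ counterpart, which reduces it to $a_t^{(k)} - a_0^{(k)} = a_k^{(t)} - a_0^{(t)}$.

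I do not expect any real obstacle here; the content is entirely formal. The only things requiring a little care are keeping the index ranges consistent across the various shifts and partial sums, and using the convention $a_0 = 0$ so that all of the identities hold uniformly for $n \geq 0$ (in particular so that $a_0^{(k)} = a_1 + \cdots + a_{k-1}$ is the correct reading and (ii) is valid at $n = 0$).
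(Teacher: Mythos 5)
Your proposal is correct and follows essentially the same route as the paper: establish the telescoping/shift identity (iii) first, then derive (ii) and (iv) from it, with (i) being a purely formal rearrangement of finite sums. The only cosmetic differences are that you prove (i) by interchanging a double sum over the $b_m$ (the paper cancels common terms in the single sums of $a_m$) and you get (iv) directly from (iii) termwise rather than via (ii); both variants are equally valid.
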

\begin{proof}
\begin{align*}
a_{n+t}^{(k)} - a_{n}^{(k)} &= \sum_{j=0}^{k-1} a_{n+t+j} - \sum_{j=0}^{k-1} a_{n+j} \\
&= a_{n+t} + \cdots + a_{n+k+t-1} - a_{n} - \cdots - a_{n+k-1} \\
&= a_{n+k} + \cdots + a_{n+k+t-1} - a_{n} - \cdots - a_{n+t-1} \\
&= \sum_{j=0}^{t-1} a_{n+k+j} - \sum_{j=0}^{t-1} a_{n+j} = a_{n+k}^{(t)} - a_{n}^{(t)}
\end{align*}
and
\[
a_{k+t} - a_{t} = \sum_{j=1}^{k+t} b_{j} - \sum_{j=1}^{t} b_{j} = \sum_{j=t+1}^{t+k} b_{j} = \sum_{j=1}^{k} b_{j}\circ \Theta^{t} = a_{k} \circ \Theta^{t}
\]
and so
\[
a_{n}^{(k)} = \sum_{j=0}^{k-1} a_{n+j} = \sum_{j=0}^{k-1} (a_{n+j} - a_{n} + a_{n}) = ka_{n} + \sum_{j=0}^{k-1} a_{j}\circ \Theta^{n} = ka_{n} + a_{0}^{(k)} \circ\Theta^{n}
\]
hence
\begin{align*}
a_{n+t}^{(k)} - a_{n}^{(k)} &= ka_{n+t} + a_{0}^{(k)} \circ\Theta^{n+t} - ka_{n} - a_{0}^{(k)}\circ\Theta^{n} \\
&= (ka_{t} + a_{0}^{(k)}\circ\Theta^{t} - a_{0}^{(k)}) \circ\Theta^{n} = (a_{t}^{(k)} - a_{0}^{(k)}) \circ\Theta^{n}.
\end{align*}
\end{proof}

\begin{lemma}\label{L:slln}
Let $\{ b_{m} \}_{m \in \mathbb{N}}$ be an iid sequence of random variables and $\{ X_{n} \}_{n\in\mathbb{N}}$ be random variables depending only on the $b_{m}$ such that there exist constants $B,D,K \geq 1$ where
\begin{align*}
(i) \quad&\mathbb{E}X_{n} = 0 \text{ for each $n$};\\
(ii) \quad&|X_{n}| \leq D \text{ almost surely for each $n$}; \\
(iii) \quad&\#\{ m : X_{n} \text{ depends on } b_{m} \} \leq K \text{ for each $n$}; and \\
(iv) \quad&\#\{ n : X_{n} \text{ depends on } b_{m} \} \leq B \text{ for each $m$}.
\end{align*}
Then almost surely
\[
\lim_{N\to\infty} \frac{1}{N}\sum_{n=1}^{N} X_{n} = 0.
\]
\end{lemma}
\begin{proof}
Observe first that for a fixed $n$, by assumption there are at most $K$ choices of $m$ such that $X_{n}$ depends on $b_{m}$.  For each $b_{m}$ there are at most $B$ choices of $q$ such that $X_{q}$ depends on $b_{m}$.  Therefore for each $n$ there are at most $B^{K}$ choices of $q$ such that $X_{q}$ is not independent of $X_{n}$.  As the $X_{n}$ are mean zero, if $X_{n}$ and $X_{q}$ are independent then $\mathbb{E}[X_{n}X_{q}] = 0$, and so
\[
\mathbb{E}\Big{[}\big{|}\frac{1}{N}\sum_{n=1}^{N} X_{n}\big{|}^{2}\Big{]} = \frac{1}{N^{2}}\sum_{n,q=1}^{N}\mathbb{E}[X_{n}X_{q}] \leq \frac{B^{K}}{N^{2}}D^{2}.
\]
Then
\[
\mathbb{E}\Big{[}\sum_{N=1}^{\infty} \big{|}\frac{1}{N}\sum_{n=1}^{N} X_{n}\big{|}^{2}\Big{]} \leq B^{K} D^{2} \sum_{N=1}^{\infty} \frac{1}{N^{2}} = B^{K} D^{2} \frac{\pi^{2}}{6}
\]
and so almost surely
\[
\sum_{N=1}^{\infty} \big{|}\frac{1}{N}\sum_{n=1}^{N} X_{n}\big{|}^{2} < \infty
\]
and, in particular,
\[
\lim_{N \to \infty} \frac{1}{N}\sum_{n=1}^{N} X_{n} = 0 \quad\text{a.s.}
\]
\end{proof}

We will need a more concrete version of the previous lemma as well:
\begin{lemma}\label{L:key}
Let $\{ b_{m} \}_{m \in \mathbb{N}}$ be an iid sequence of random variables and $\{ X_{n} \}_{n\in\mathbb{N}}$ be random variables depending only on the $b_{m}$ such that there exist constants $B,D,K \geq 1$ where
\begin{align*}
(i) \quad&\mathbb{E}X_{n} = 0 \text{ for each $n$};\\
(ii) \quad&|X_{n}| \leq D \text{ almost surely for each $n$}; \\
(iii) \quad&\#\{ m : X_{n} \text{ depends on } b_{m} \} \leq K \text{ for each $n$}; and \\
(iv) \quad&\#\{ n : X_{n} \text{ depends on } b_{m} \} \leq B \text{ for each $m$}.
\end{align*}
Then for any $N,R \in \mathbb{N}$ and $\delta > 0$
\[
P(\big{|}\frac{1}{N}\sum_{n=1}^{N}X_{n}\big{|} \geq \delta) \leq C_{R}\delta^{-2R}(DKB)^{2R}N^{-R}
\]
where $C_{R}$ is a constant depending only on $R$.
\end{lemma}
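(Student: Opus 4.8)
The plan is to apply Markov's inequality to a high even moment of $\sum_{n=1}^{N}X_{n}$. Fix $R\in\mathbb{N}$. Since $2R$ is even, Markov's inequality applied to $\big|\sum_{n=1}^{N}X_{n}\big|^{2R}$ gives
\[
P\Big(\big|\tfrac{1}{N}\sum_{n=1}^{N}X_{n}\big|\ge\delta\Big)\le\delta^{-2R}N^{-2R}\,\mathbb{E}\Big|\sum_{n=1}^{N}X_{n}\Big|^{2R},
\]
so it suffices to establish the moment bound $\mathbb{E}\big|\sum_{n=1}^{N}X_{n}\big|^{2R}\le C_{R}(DKB)^{2R}N^{R}$ with $C_{R}$ depending only on $R$.

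First I would expand the moment. Writing $\big|\sum_{n}X_{n}\big|^{2R}=\big(\sum_{n}X_{n}\big)^{R}\big(\sum_{n}\overline{X_{n}}\big)^{R}$ and multiplying out, $\mathbb{E}\big|\sum_{n}X_{n}\big|^{2R}$ is a sum over the $N^{2R}$ tuples $(n_{1},\ldots,n_{2R})\in\{1,\ldots,N\}^{2R}$ of terms $\mathbb{E}[Y_{n_{1}}\cdots Y_{n_{2R}}]$, where each $Y_{n_{i}}$ is $X_{n_{i}}$ or $\overline{X_{n_{i}}}$; since $\overline{X_{n}}$ has mean zero, is bounded by $D$, and depends on exactly the same $b_{m}$ as $X_{n}$, the conjugation is immaterial below. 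By $(ii)$ each such term has modulus at most $D^{2R}$, so the whole estimate reduces to a counting statement: the number of tuples with $\mathbb{E}[Y_{n_{1}}\cdots Y_{n_{2R}}]\ne0$ is at most $C_{R}(KB)^{2R}N^{R}$.

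For the count I would introduce the dependency graph $G$ on the distinct values occurring in the tuple, joining $m\sim m'$ whenever $X_{m}$ and $X_{m'}$ both depend on some common $b_{\ell}$. Since the $b_{m}$ are iid, the factors of the product indexed by distinct connected components of $G$ are mutually independent, so by $(i)$ a term is zero unless every component of $G$ has total multiplicity (as counted in the tuple) at least $2$; in particular $G$ has at most $R$ components. I would then enumerate the nonzero tuples by first recording combinatorial data — which of the $n_{i}$ coincide, the grouping of the resulting distinct values into the components of $G$ (each group having total multiplicity $\ge 2$, hence at most $R$ groups), and a spanning tree of each component — of which there are at most $C_{R}$ possibilities. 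It remains to choose the distinct values themselves: the root of each component's spanning tree is ``free'', with at most $N$ choices, and there are at most $R$ of them; every non-root value is adjacent in $G$ to a value already chosen, and by $(iii)$--$(iv)$ each value has at most $KB$ neighbors in $G$ (at most $K$ relevant $b_{\ell}$, each shared with at most $B$ other values), so each non-root value costs at most $KB$ choices. As $N\ge1$ this gives at most $C_{R}(KB)^{2R}N^{R}$ nonzero tuples; combined with the per-term bound $D^{2R}$ and Markov's inequality, this proves the Lemma.

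The crux — and the step demanding the most care — is the counting, precisely the extraction of $N^{R}$ rather than the trivial $N^{2R}$. This is where hypotheses $(i)$--$(iv)$ are used: the mean-zero condition $(i)$ rules out tuples in which any value forms its own component, capping the number of components, hence of ``free'' value choices, at $R$; while the bounded-overlap conditions $(iii)$--$(iv)$ ensure that each of the remaining (non-root) values, being forced adjacent in $G$ to an already-chosen value, costs only the bounded factor $KB$ in place of a factor $N$. The bookkeeping should be organized so that the component/tree structure is fixed as combinatorial data \emph{before} the actual values are selected, guaranteeing that exactly one factor of $N$ is spent per component and that no valid tuple is overlooked; once this is in place the remaining estimates are routine.
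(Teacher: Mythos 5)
Your proposal is correct and follows essentially the same route as the paper: Markov's inequality applied to the $2R$-th moment, the mean-zero hypothesis eliminating every tuple containing an index whose dependency set is disjoint from all the others, and a count of the surviving tuples in which one free factor of $N$ is spent per interaction group (at most $R$ of them) and each remaining index costs only $KB$. The paper organizes this count via partitions of $\{1,\ldots,2R\}$ with no singleton blocks and a chain argument inside each block, which is the same bookkeeping as your spanning-tree enumeration of the dependency graph.
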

\begin{proof}
Set
$Z_{n} = \{ m : X_{n} \text{ depends on } b_{m} \}$
so that $\# Z_{n} \leq K$ by hypothesis.  Note also that for each $m$,
$\# \{ n : m \in Z_{n} \} \leq B$.
Let
\begin{align*}
\mathcal{Q} = \big{\{} (n_{1},&\ldots,n_{2R}) \in \{ 1, \ldots, N \}^{2R} : \\
&\forall j \in \{ 1, \ldots, 2R \} \text{ } \exists j^{\prime} \in \{ 1, \ldots, 2R \}, j \ne j^{\prime}, Z_{n_{j}} \cap Z_{n_{j^{\prime}}} \ne \emptyset \big{\}}
\end{align*}
and set $Q = \# \mathcal{Q}$.

By the Chebyshev Inequality idea,
\begin{align*}
P(\big{|}\frac{1}{N}\sum_{n=1}^{N}X_{n}\big{|} \geq \delta) &= \mathbb{E}\bbone_{\delta^{-1} \big{|}\frac{1}{N}\sum_{n=1}^{N}X_{n}\big{|} \geq 1} \\
&\leq \mathbb{E}\Big{|}\delta^{-1} \big{|}\frac{1}{N}\sum_{n=1}^{N}X_{n}\big{|}\Big{|}^{2R} \\
&= \delta^{-2R} \mathbb{E}\big{|}\frac{1}{N}\sum_{n=1}^{N}X_{n}\big{|}^{2R} \\
&= \delta^{-2R} N^{-2R} \sum_{(n_{1},\ldots,n_{2R})\in\{1,\ldots,N\}^{2R}} \mathbb{E}\prod_{j=1}^{2R}X_{n_{j}} \\
&= \delta^{-2R} N^{-2R} \Big{[}  \sum_{(n_{1},\ldots,n_{2R})\in\mathcal{Q}} \mathbb{E}\prod_{j=1}^{2R}X_{n_{j}} + \sum_{(n_{1},\ldots,n_{2R})\notin\mathcal{Q}} 0 \Big{]} \\
&\leq \delta^{-2R} N^{-2R} Q D^{2R}
\end{align*}
where the next to last line follows since for $(n_{1},\ldots,n_{2R}) \notin \mathcal{Q}$ there is some isolate $Z_{n_{j}}$ (in the sense that $X_{n_{j}}$ is independent of the other $X_{n_{j^{\prime}}}$) giving an $\mathbb{E}X_{n_{j}} = 0$ factor in the product.
Let
\[
\mathcal{P} = \{ \text{ $p$ a partition of } \{ 1, \ldots, 2R \} : \text{ no element is alone } \big{\}}
\]
and observe that $\mathcal{P}$ depends only on $R$.  Our plan is to split the $n_{j}$ into the collections where there is overlap among the corresponding $Z_{n_{j}}$ (a partition in $\mathcal{P}$) and count the number of possibilities for the overlap from there.
For a given partition $p \in \mathcal{P}$ and $q \in p$ let
\[
\mathcal{Q}_{p,q} = \big{\{} (n_{j})_{j \in q} \in \{1,\ldots,N\}^{\#q} : \nexists j \in q \text{ } \forall j^{\prime} \in q, j \ne j^{\prime}, Z_{n_{j}} \cap Z_{n_{j^{\prime}}} = \emptyset \big{\}}
\]
be the set of $\#q$-tuples where all the corresponding $Z_{n_{j}}$ interact with one another.  Observe that
\[
\# \mathcal{Q}_{p,q} \leq (\#q)! \# \big{\{} (n_{1},\ldots,n_{\#q}) \in \{1,\ldots,N\}^{\#q} : \forall j \in \{1,\ldots,N\} Z_{n_{j}} \cap Z_{n_{j+1}} \ne \emptyset \big{\}}
\]
since we can rearrange the $j$ (that is, assuming say $1 \in q$ we know that $Z_{n_{1}}$ must interact with some $Z_{n_{j}}$, $j\ne1$ which in turn must interact with some $Z_{n_{j^{\prime}}}$, $j^{\prime} \ne j,1$ and this can be continued without a cycle since the $Z_{n_{j}}$ all interact in $q$).  Therefore
\[
\# \mathcal{Q}_{p,q} \leq (\#q)! NK (BK)^{\#q - 1}
\]
by choosing $n_{1}$ to be any of $N$ choices, choosing a coordinate (at most $K$ choices) such that $Z_{n_{1}} \cap Z_{n_{2}} \ne \emptyset$ is witnessed by that coordinate, choosing $n_{2}$ to be one of the at most $B$ choices where such intersection is possible and repeating for the remaining $n_{j}$.
Now
\begin{align*}
Q &= \#\mathcal{Q} = \sum_{p \in \mathcal{P}} \prod_{q \in p} \# \mathcal{Q}_{p,q} 
\leq \sum_{p \in \mathcal{P}} \prod_{q \in p} \big{(}(\#q)! NK (BK)^{\#q - 1}\big{)} \\
&\leq \sum_{p \in \mathcal{P}} \big{(}\prod_{q \in p}(\#q)!\big{)} N^{\#p} (BK)^{\sum_{q\in p}\#q}
\leq \sum_{p \in \mathcal{P}} (2R)! N^{R} (BK)^{2R} 
= (\#\mathcal{P})(2R)! (BK)^{2R} N^{R}
\end{align*}
since the size of a partition $p \in \mathcal{P}$ is at most $R$ (which occurs when the $2R$ elements are partitioned into pairs since no element can be alone).
Therefore
\begin{align*}
P(\big{|}\frac{1}{N}\sum_{n=1}^{N}\big{|} \geq \delta)
&\leq \delta^{-2R} N^{-2R} D^{2R} (\#\mathcal{P})(2R)! (BK)^{2R} N^{R} \\
&= (\delta^{-1}DBK)^{2R}C_{R} N^{-R}
\end{align*}
where $C_{R} = (\#\mathcal{P})(2R)!$ depends only on $R$.
\end{proof}

\begin{lemma}\label{L:key2single}
Let $\{ b_{m} \}_{m\in\mathbb{N}}$ be an iid sequence of random variables and $\{ Y_{n} \}_{n \in \mathbb{N}}$ be a sequence of $\mathbb{N}$-valued random variables depending only on the $b_{m}$ such that there exist constants $B,K,L \geq 1$ where
\begin{align*}
(i) \quad&\#\{ m : Y_{n} \text{ depends on } b_{m} \} \leq K \text{ for each $n$};\\
(ii) \quad&\#\{ n : Y_{n} \text{ depends on } b_{m} \} \leq B \text{ for each $m$}; and \\
(iii) \quad&\limsup_{N\to\infty} \frac{1}{N}\sum_{n=1}^{N}Y_{n} + \mathbb{E}Y_{n} \leq L \text{ almost surely}.
\end{align*}
Then there exists a measure one set on which for every $z \in S^{1}$
\[
\lim_{N \to \infty} \frac{1}{N}\sum_{n=1}^{N}z^{Y_{n}} - \mathbb{E}\frac{1}{N}\sum_{n=1}^{N}z^{Y_{n}} = 0.
\]
\end{lemma}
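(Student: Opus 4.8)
The plan is to fix a $z \in S^1$ and apply Lemma~\ref{L:slln} to a truncated and recentered version of the functions $z^{Y_n}$, then upgrade the single-$z$ statement to a simultaneous-over-$z$ statement by a compactness/equicontinuity argument. First I would note that for fixed $z$ the random variables $z^{Y_n}$ are bounded by $1$, depend only on finitely many $b_m$ (at most $K$ of them by $(i)$, and each $b_m$ influences at most $B$ of them by $(ii)$), so the sequence $X_n := z^{Y_n} - \mathbb{E}z^{Y_n}$ satisfies hypotheses $(i)$--$(iv)$ of Lemma~\ref{L:key} with $D = 2$, and the constants $B,K$ do not depend on $z$. Hence Lemma~\ref{L:slln} gives, for each fixed $z$, a measure one set $\Omega_z$ on which $\frac{1}{N}\sum_{n=1}^N X_n \to 0$, which is exactly $\frac{1}{N}\sum z^{Y_n} - \mathbb{E}\frac{1}{N}\sum z^{Y_n} \to 0$.

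The issue is that $\Omega_z$ depends on $z$ and there are uncountably many $z$, so the naive intersection over all $z$ is not available. To handle this I would use hypothesis $(iii)$: set $f_N(\omega,z) = \frac{1}{N}\sum_{n=1}^N z^{Y_n(\omega)}$ and $g_N(z) = \mathbb{E} f_N(\cdot,z)$. Writing $z = e^{2\pi i \theta}$, each $f_N(\omega,\cdot)$ is a trigonometric polynomial whose modulus of continuity at scale $\epsilon$ is controlled by $\frac{2\pi\epsilon}{N}\sum_{n=1}^N Y_n(\omega)$ (since $|z_1^{Y_n} - z_2^{Y_n}| \le Y_n \cdot |z_1 - z_2|$ when $|z_1 - z_2|$ is small, by the mean value theorem on the circle). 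By $(iii)$, on a measure one set $\Omega_0$ we have $\limsup_N \frac1N\sum_{n=1}^N Y_n \le 2L$ say (using $\mathbb{E}Y_n \ge 0$, or more carefully the stated bound), so the family $\{f_N(\omega,\cdot)\}_N$ is uniformly equicontinuous in $z$ for each $\omega \in \Omega_0$, with a modulus independent of $N$. Likewise $g_N$ is equicontinuous since $|g_N(z_1) - g_N(z_2)| \le \mathbb{E}\frac1N\sum Y_n \cdot |z_1 - z_2|$ and $\limsup_N \mathbb{E}\frac1N\sum_{n=1}^N Y_n \le L$ by Fatou applied to $(iii)$ (or directly from the hypothesis).

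Now I would fix a countable dense set $\mathcal{D} \subset S^1$ (say roots of unity, or a dense sequence of $\theta$'s) and let $\Omega' = \Omega_0 \cap \bigcap_{z \in \mathcal{D}} \Omega_z$, which has measure one. On $\Omega'$, for every $z \in \mathcal{D}$ we have $f_N(\omega,z) - g_N(z) \to 0$. Given an arbitrary $z \in S^1$ and $\eta > 0$, pick $z' \in \mathcal{D}$ within the equicontinuity scale so that $|f_N(\omega,z) - f_N(\omega,z')| < \eta$ and $|g_N(z) - g_N(z')| < \eta$ for all $N$ large (using the uniform-in-$N$ equicontinuity from the previous paragraph); then $|f_N(\omega,z) - g_N(z)| \le |f_N(\omega,z) - f_N(\omega,z')| + |f_N(\omega,z') - g_N(z')| + |g_N(z') - g_N(z)| < 3\eta$ for $N$ large. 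Since $\eta$ was arbitrary, $f_N(\omega,z) - g_N(z) \to 0$ for every $z \in S^1$, which is the claim. The main obstacle is the equicontinuity estimate: one must be careful that the bound on $\frac1N\sum Y_n$ from $(iii)$ is genuinely uniform in $N$ (it is a $\limsup$, so it controls all large $N$, and the finitely many small $N$ are harmless since each individual $f_N$ is continuous), and that the same modulus of continuity works simultaneously for the random averages and their expectations — both follow from the same linear-in-$|z_1 - z_2|$ Lipschitz-type bound with the $Y_n$-average as the Lipschitz constant.
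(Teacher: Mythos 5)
Your proposal is correct and follows essentially the same route as the paper: the paper also applies the moment bound (Lemma \ref{L:key}) to $X_n(z) = z^{Y_n} - \mathbb{E}z^{Y_n}$ for fixed $z$, then upgrades to all $z \in S^1$ simultaneously using the Lipschitz estimate $|z^t - w^t| \le t|z-w|$, hypothesis (iii) to control the resulting Lipschitz constants, and a discretization of the circle. The only cosmetic difference is that the paper uses a finite $\delta/(L+1)$-net for each rational $\delta$ with a quantitative Borel--Cantelli bound (which it needs anyway for the stronger $\sup_{k\le N}$ version in Lemma \ref{L:key2}), whereas you use a single countable dense set plus equicontinuity; both work here.
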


 This is a trivial consequence of the following:
\begin{lemma}\label{L:key2}
Let $\{ b_{m} \}_{m\in\mathbb{N}}$ be an iid sequence of random variables and $\{ Y_{n}^{(k)} \}_{n,k \in \mathbb{N}}$ be a family of sequences of $\mathbb{Z}$-valued random variables depending only on the $b_{m}$ such that there exist constants $B,K,L \geq 1$ where
\begin{align*}
(i) \quad&\#\{ m : Y_{n}^{(k)} \text{ depends on } b_{m} \} \leq K \text{ for each $n$ and $k$};\\
(ii) \quad&\#\{ n : Y_{n}^{(k)} \text{ depends on } b_{m} \} \leq B \text{ for each $m$ and $k$}; and \\
(iii) \quad&\limsup_{N\to\infty} \sup_{k \leq N} \frac{1}{N}\sum_{n=1}^{N}|Y_{n}^{(k)}| + \mathbb{E}|Y_{n}^{(k)}| \leq L \text{ almost surely}.
\end{align*}
Then there exists a measure one set on which for every $z \in S^{1}$
\[
\lim_{N \to \infty} \sup_{k \leq N} \Big{|}\frac{1}{N}\sum_{n=1}^{N}z^{Y_{n}^{(k)}} - \mathbb{E}\frac{1}{N}\sum_{n=1}^{N}z^{Y_{n}^{(k)}} \Big{|} = 0.
\]
\end{lemma}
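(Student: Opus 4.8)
The plan is to fix a single point $z \in S^1$, center the summands so that Lemma~\ref{L:key} applies, and then pass from a fixed $z$ to all of $S^1$ by an equicontinuity estimate, absorbing the supremum over $k \le N$ into a union bound. For the equicontinuity input I would use the elementary inequality $|z^m - w^m| \le |m|\,|z-w|$ for $z,w \in S^1$ and $m \in \mathbb{Z}$ (telescoping around the circle). Writing $F_{N,k}(z) = \frac{1}{N}\sum_{n=1}^{N} z^{Y_n^{(k)}}$ and $G_{N,k}(z) = F_{N,k}(z) - \mathbb{E}F_{N,k}(z)$, this shows $F_{N,k}$ is Lipschitz on $S^1$ with constant at most $\frac{1}{N}\sum_{n=1}^{N}|Y_n^{(k)}|$ and $\mathbb{E}F_{N,k}$ with constant at most $\frac{1}{N}\sum_{n=1}^{N}\mathbb{E}|Y_n^{(k)}|$, hence $G_{N,k}$ has Lipschitz constant at most the sum of these two. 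Since $|Y_n^{(k)}|$ and $\mathbb{E}|Y_n^{(k)}|$ are nonnegative, hypothesis (iii) forces both $\frac1N\sum_n|Y_n^{(k)}|$ and $\frac1N\sum_n\mathbb{E}|Y_n^{(k)}|$ to be eventually at most $L+1$, uniformly in $k \le N$, on a measure-one set $\Omega_0$; in particular the Lipschitz constant of $G_{N,k}$ is at most $2L+2$ for $N \ge N_0(\omega)$ and $k \le N$ when $\omega \in \Omega_0$.

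Next I would fix a countable dense set $\mathcal{D} \subset S^1$. For fixed $w \in \mathcal{D}$, $N$, and $k \le N$, apply Lemma~\ref{L:key} to the random variables $X_n = w^{Y_n^{(k)}} - \mathbb{E}w^{Y_n^{(k)}}$, $1 \le n \le N$: they have mean zero, are bounded by $D = 2$, depend only on the $b_m$, each depends on at most $K$ of the $b_m$, and each $b_m$ affects at most $B$ of them (the added constant $\mathbb{E}w^{Y_n^{(k)}}$ introduces no new dependence), so with $R = 3$ the Lemma gives
\[
P\bigl( |G_{N,k}(w)| \ge \delta \bigr) \le C_3\,\delta^{-6}(2KB)^6 N^{-3}.
\]
Summing over $k \in \{1,\dots,N\}$ yields $P(\max_{k\le N}|G_{N,k}(w)| \ge \delta) \le C_3\,\delta^{-6}(2KB)^6 N^{-2}$, which is summable in $N$. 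Running the Borel--Cantelli argument of Lemma~\ref{L:slln} — intersecting over $\delta \in \mathbb{Q}^+$ and over the countable set $w \in \mathcal{D}$ — produces a measure-one set $\Omega_1$ on which $\max_{k\le N}|G_{N,k}(w)| \to 0$ for every $w \in \mathcal{D}$.

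It then remains to combine these on the measure-one set $\Omega_0 \cap \Omega_1$, which does not depend on $z$. Given $z \in S^1$ and $\varepsilon > 0$, choose $w \in \mathcal{D}$ with $|z-w| < \varepsilon$; then for $N \ge N_0$,
\[
\sup_{k\le N}|G_{N,k}(z)| \le \sup_{k\le N}|G_{N,k}(z) - G_{N,k}(w)| + \max_{k\le N}|G_{N,k}(w)| \le (2L+2)\varepsilon + \max_{k\le N}|G_{N,k}(w)|,
\]
so $\limsup_{N\to\infty}\sup_{k\le N}|G_{N,k}(z)| \le (2L+2)\varepsilon$, and letting $\varepsilon \to 0$ gives the desired limit for every $z$; Lemma~\ref{L:key2single} is then the special case $Y_n^{(k)} = Y_n$ for all $k$. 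The main obstacle is the bookkeeping of the two suprema: the supremum over $k \le N$ costs a factor of $N$ in the union bound, so I must take $R$ large enough (here $R = 3$) in Lemma~\ref{L:key} that the $N^{-R}$ decay still yields a summable series, and the Lipschitz estimate extracted from (iii) must be uniform over $k \le N$ — which is exactly why hypothesis (iii) carries the $\sup_{k\le N}$ — so that one fixed dense set $\mathcal{D}$ handles every $z$ simultaneously with a single measure-one set. I do not expect any further difficulty.
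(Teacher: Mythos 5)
Your proposal is correct and follows essentially the same route as the paper: center to $X_n^{(k)}(z)=z^{Y_n^{(k)}}-\mathbb{E}z^{Y_n^{(k)}}$, apply Lemma~\ref{L:key} with $R=3$ so the union bound over $k\le N$ still leaves a summable $N^{-2}$ tail, run Borel--Cantelli, and pass to all of $S^1$ via the Lipschitz bound $|z^m-w^m|\le|m|\,|z-w|$ together with hypothesis (iii). The only (cosmetic) difference is that the paper uses a finite $\delta$-dependent net of about $(L+1)/\delta$ points rather than one fixed countable dense set with a uniform Lipschitz constant; both work, and your handling of negative exponents ($m\in\mathbb{Z}$) is if anything slightly more careful than the paper's.
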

\begin{proof}
Consider the functions
\[
X_{n}^{(k)}(z) = z^{Y_{n}^{(k)}} - \mathbb{E}z^{Y_{n}^{(k)}}.
\]
Then $\mathbb{E}X_{n}^{(k)}(z) = 0$ and $|X_{n}^{(k)}(z)| \leq 2$ so we may apply Lemma \ref{L:key} for each fixed $k$ and $z$ and obtain that for any $N,R \in \mathbb{N}$ and $\delta > 0$
\[
P(\big{|}\frac{1}{N}\sum_{n=1}^{N}X_{n}^{(k)}(z)\big{|} \geq \delta) \leq C_{R}\delta^{-2R}(2KB)^{2R}N^{-R}.
\]
Hence, setting $R = 3$,
\begin{align*}
P(\sup_{k \leq N} \big{|}\frac{1}{N}\sum_{n=1}^{N}X_{n}^{(k)}(z)\big{|} \geq \delta) 
&\leq \sum_{k=1}^{N} P(\big{|}\frac{1}{N}\sum_{n=1}^{N}X_{n}^{(k)}(z)\big{|} \geq \delta) \\
&\leq \sum_{k=1}^{N} C_{3}\delta^{-6}(2KB)^{6}N^{-3} \\
&= C_{3}\delta^{-6}(2KB)^{6} N^{-2}.
\end{align*}
Note that for any $z,w \in S^{1}$ and $t \in \mathbb{N}$ we have that
\[
|z^{t} - w^{t}| \leq t |z - w|
\]
since the $t = 1$ case is immediate and
\[
|z^{t+1} - w^{t+1}| \leq |z^{t} - w^{t}||z| + |z - w||w^{t}| \leq |z^{t} - w^{t}| + |z - w|.
\]
Therefore
\[
|X_{n}^{(k)}(z) - X_{n}^{(k)}(w)| \leq |z^{Y_{n}^{(k)}} - w^{Y_{n}^{(k)}}| + \mathbb{E}|z^{Y_{n}^{(k)}} - w^{Y_{n}^{(k)}}|
\leq \big{(}|Y_{n}^{(k)}| + \mathbb{E}|Y_{n}^{(k)}|\big{)} |z - w|
\]
and so
\[
\big{|}\frac{1}{N}\sum_{n=1}^{N}X_{n}^{(k)}(z) - \frac{1}{N}\sum_{n=1}^{N}X_{n}^{(k)}(w)\big{|}
\leq \Big{(}\frac{1}{N}\sum_{n=1}^{N}|Y_{n}^{(k)}| + \mathbb{E}|Y_{n}^{(k)}|\Big{)}|z - w|.
\]
Let $z_{j} \in S^{1}$ for $j=1,\ldots,J$ be a set of (irrational) points such that for any $z \in S^{1}$ we have $\sup_{j} |z - z_{j}| < \frac{\delta}{L + 1}$ so we may take $J \leq \frac{L + 1}{\delta} + 1$.  Observe that if for some $z \in S^{1}$,
\[
\big{|}\frac{1}{N}\sum_{n=1}^{N}X_{n}^{(k)}(z)\big{|} \geq 2\delta
\]
then either there is some $j \in \{ 1, \ldots, J \}$ such that
\[
\big{|}\frac{1}{N}\sum_{n=1}^{N}X_{n}^{(k)}(z_{j})\big{|} \geq \delta
\]
or else it must be that
\[
\frac{1}{N}\sum_{n=1}^{N}|Y_{n}^{(k)}| + \mathbb{E}|Y_{n}^{(k)}| \geq L + 1
\]
since $|z - z_{j}| < \frac{\delta}{L + 1}$.  Hence if for some $z$,
\[
\limsup_{N \to \infty} \sup_{k \leq N} \big{|}\frac{1}{N}\sum_{n=1}^{N}X_{n}^{(k)}(z)\big{|} \geq 2\delta
\]
then either for some $j$ (always choose the $j$ such that $z_{j}$ is closest to $z$)
\[
\limsup_{N \to \infty} \sup_{k\leq N}\big{|}\frac{1}{N}\sum_{n=1}^{N}X_{n}^{(k)}(z_{j})\big{|} \geq \delta
\]
or else
\[
\limsup_{N \to \infty} \sup_{k\leq N}\frac{1}{N}\sum_{n=1}^{N}|Y_{n}^{(k)}| + \mathbb{E}|Y_{n}^{(k)}| \geq L + 1
\]
since a sequence of $N$ such that the first limit is $\geq 2\delta$ gives a subsequence where one of the other two is $\geq \delta$ or $L + 1$.
Therefore
\begin{align*}
&P(\sup_{z \in S^{1}} \limsup_{N \to \infty} \sup_{k \leq N} \big{|}\frac{1}{N}\sum_{n=1}^{N}X_{n}^{(k)}(z)\big{|} \geq 2\delta) \\
&\quad\quad\quad\quad\quad\quad\leq P(\limsup_{N \to \infty} \sup_{k \leq N} \frac{1}{N}\sum_{n=1}^{N}|Y_{n}^{(k)}| + \mathbb{E}|Y_{n}^{(k)}| \geq L + 1) \\
&\quad\quad\quad\quad\quad\quad\quad\quad+ \sum_{j=1}^{J} P(\limsup_{N \to \infty} \sup_{k \leq N} \big{|}\frac{1}{N}\sum_{n=1}^{N}X_{n}^{(k)}(z_{j})\big{|} \geq \delta).
\end{align*}
Now by hypothesis, $\limsup_{N \to \infty} \sup_{k \leq N} \frac{1}{N}\sum_{n=1}^{N}|Y_{n}^{(k)}| + \mathbb{E}|Y_{n}^{(k)}| \leq L$ almost surely so the first probability is zero.  Using the Borel-Cantelli idea,
\begin{align*}
P(\limsup_{N \to \infty} \sup_{k \leq N} &\big{|}\frac{1}{N}\sum_{n=1}^{N}X_{n}^{(k)}(z_{j})\big{|} \geq \delta) \\
&= P(\bigcap_{M=1}^{\infty}\bigcup_{N=M}^{\infty} \sup_{k \leq N} \big{|}\frac{1}{N}\sum_{n=1}^{N}X_{n}^{(k)}(z_{j})\big{|} \geq \delta) \\
&\leq \lim_{M\to\infty} \sum_{N=M}^{\infty} P(\sup_{k \leq N} \big{|}\frac{1}{N}\sum_{n=1}^{N}X_{n}^{(k)}(z_{j})\big{|} \geq \delta) \\
&\leq \lim_{M\to\infty} \sum_{N=M}^{\infty} C_{3}\delta^{-6}(2KB)^{6}N^{-2} = 0
\end{align*}
and since $J$ is fixed we obtain that
\[
P(\sup_{z \in S^{1}} \limsup_{N \to \infty} \sup_{k \leq N} \big{|}\frac{1}{N}\sum_{n=1}^{N}X_{n}^{(k)}(z)\big{|} \geq 2\delta) = 0.
\]
Taking the measure one set for each rational $\delta > 0$ and unioning gives that
\[
P(\sup_{z \in S^{1}} \limsup_{N \to \infty} \sup_{k \leq N} \big{|}\frac{1}{N}\sum_{n=1}^{N}X_{n}^{(k)}(z)\big{|} > 0) = 0.
\]
Hence there is a measure one set on which for any $z \in S^{1}$
\[
\lim_{N \to \infty} \sup_{k\leq N}\big{|}\frac{1}{N}\sum_{n=1}^{N}X_{n}^{(k)}(z)\big{|} = 0.
\]
Plugging back in the definition of $X_{n}^{(k)}(z)$ this means that on this measure one set for every $z \in S^{1}$,
\[
\lim_{N \to \infty} \sup_{k \leq N} \Big{|}\frac{1}{N}\sum_{n=1}^{N}z^{Y_{n}^{(k)}} - \mathbb{E}\frac{1}{N}\sum_{n=1}^{N}z^{Y_{n}^{(k)}}\Big{|} = 0.
\]
\end{proof}

\subsection{The Strong Law for Stochastic Sequences}

\begin{proposition}\label{P:ergodic}
Let $\{ a_{n} \}$ be a stochastic sequence generated by a random variable $b$ with finite mean.  Let $\ell$ be a fixed positive integer.  Then there is a measure one set on which for every $z \in S^{1}$,
\[
\lim_{N \to \infty} \frac{1}{N}\sum_{n=1}^{N}z^{a_{n+\ell} - a_{n}} = \mathbb{E}z^{a_{\ell}}.
\]
\end{proposition}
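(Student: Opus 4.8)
The plan is to recognize $a_{n+\ell}-a_{n}$ as a short block sum of the increments and then feed it into Lemma \ref{L:key2single}. Concretely, I would set $Y_{n} = a_{n+\ell}-a_{n}$. By Lemma \ref{L:remarks}(iii) this equals $a_{\ell}\circ\Theta^{n} = b_{n+1}+\cdots+b_{n+\ell}$, so $Y_{n}$ is an $\mathbb{N}$-valued random variable depending only on the $\ell$ variables $b_{n+1},\ldots,b_{n+\ell}$. This immediately gives hypothesis (i) of Lemma \ref{L:key2single} with $K=\ell$, and since for a fixed $m$ the variable $Y_{n}$ involves $b_{m}$ exactly when $m-\ell\le n\le m-1$, it gives hypothesis (ii) with $B=\ell$.

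For hypothesis (iii) I would invoke the ordinary Strong Law of Large Numbers for the iid finite-mean sequence $\{b_{m}\}$: writing $\frac{1}{N}\sum_{n=1}^{N}Y_{n} = \sum_{j=1}^{\ell}\frac{1}{N}\sum_{n=1}^{N}b_{n+j}$ and noting each inner average converges almost surely to $\mathbb{E}b$, we get $\frac{1}{N}\sum_{n=1}^{N}Y_{n}\to\ell\,\mathbb{E}b$ almost surely; since also $\mathbb{E}Y_{n}=\ell\,\mathbb{E}b$ for every $n$, hypothesis (iii) holds with $L = 2\ell\,\mathbb{E}b + 1$, which is finite precisely because $b$ has finite mean.

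Then Lemma \ref{L:key2single} yields a measure one set on which for every $z\in S^{1}$
\[
\lim_{N\to\infty}\Big(\frac{1}{N}\sum_{n=1}^{N}z^{Y_{n}} - \mathbb{E}\frac{1}{N}\sum_{n=1}^{N}z^{Y_{n}}\Big) = 0 .
\]
The last step is to identify the centering term: by the identical distribution of the $b_{m}$, $\mathbb{E}z^{Y_{n}} = \mathbb{E}z^{b_{n+1}+\cdots+b_{n+\ell}} = \mathbb{E}z^{b_{1}+\cdots+b_{\ell}} = \mathbb{E}z^{a_{\ell}}$ for every $n$, hence $\mathbb{E}\frac{1}{N}\sum_{n=1}^{N}z^{Y_{n}} = \mathbb{E}z^{a_{\ell}}$ for all $N$. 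Substituting this together with $Y_{n}=a_{n+\ell}-a_{n}$ gives the claim.

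I expect the only point requiring genuine care to be hypothesis (iii): one must control the partial averages of the (generally unbounded) $Y_{n}$, which is exactly where finiteness of $\mathbb{E}b$ enters and where one appeals to the classical SLLN rather than to Lemma \ref{L:key} directly (the latter needs bounded summands, which $Y_{n}$ is not — this is also why one passes to $z^{Y_{n}}$, bounded by $1$). It is worth noting in passing that applying the ergodic theorem for the Bernoulli shift $\Theta$ to the function $\omega\mapsto z^{a_{\ell}(\omega)}$ would only give convergence on a $z$-dependent null set, whereas the content of Lemma \ref{L:key2single} is precisely the uniformity over all $z\in S^{1}$. Everything else — the combinatorial bounds $K=B=\ell$ on the dependence and the expectation computation — is routine bookkeeping via Lemma \ref{L:remarks} and the iid hypothesis.
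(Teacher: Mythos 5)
Your proof is correct and is essentially the paper's own argument: the paper establishes the more general Proposition \ref{P:totallyergodic} (arbitrary $k$) by exactly this route --- writing $Y_{n}=a_{n+\ell}^{(k)}-a_{n}^{(k)}=(a_{\ell}^{(k)}-a_{0}^{(k)})\circ\Theta^{n}$, bounding the coordinate dependence by $\ell+k$, supplying hypothesis (iii) via the classical SLLN, and invoking Lemma \ref{L:key2single} together with the iid expectation computation --- and then deduces the present statement as the $k=1$ specialization, which is precisely what you carried out directly.
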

 This is an easy consequence of the following (with $k = 1$):
\begin{proposition}\label{P:totallyergodic}
Let $\{ a_{n} \}$ be a stochastic sequence generated by a random variable $b$ with finite mean.  Let $\ell, k$ be fixed positive integers.  Then there is a measure one set on which for every $z \in S^{1}$,
\[
\lim_{N \to \infty} \frac{1}{N}\sum_{n=1}^{N}z^{a_{\ell+n}^{(k)} - a_{n}^{(k)}} = \mathbb{E}z^{a_{\ell}^{(k)} - a_{0}^{(k)}}.
\]
\end{proposition}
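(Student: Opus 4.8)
The plan is to apply Lemma~\ref{L:key2single} to the random variables $Y_{n} = a_{\ell+n}^{(k)} - a_{n}^{(k)}$. By part (iv) of Lemma~\ref{L:remarks} we have $Y_{n} = (a_{\ell}^{(k)} - a_{0}^{(k)}) \circ \Theta^{n}$, so each $Y_{n}$ is a shift of one fixed random variable. Unwinding the partial-sum notation (and using $a_{0} = 0$),
\[
a_{\ell}^{(k)} - a_{0}^{(k)} = \sum_{z=0}^{k-1}(a_{\ell+z} - a_{z}) = \sum_{z=0}^{k-1}(b_{z+1} + \cdots + b_{z+\ell}),
\]
a nonnegative-integer combination of $b_{1},\ldots,b_{k+\ell-1}$ whose coefficients sum to $k\ell$; in particular $Y_{n}$ is an $\mathbb{N}$-valued random variable (indeed $Y_{n} \geq k\ell \geq 1$) depending only on $b_{n+1},\ldots,b_{n+k+\ell-1}$.

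Next I would check the three hypotheses of Lemma~\ref{L:key2single}. Hypothesis (i): each $Y_{n}$ depends on at most $K = k+\ell-1$ of the $b_{m}$ by the preceding paragraph. Hypothesis (ii): $Y_{n}$ depends on $b_{m}$ only when $m - (k+\ell-1) \leq n \leq m-1$, so at most $B = k+\ell-1$ of the $Y_{n}$ depend on any given $b_{m}$; both $K$ and $B$ are constants since $k,\ell$ are fixed. Hypothesis (iii): since $Y_{n} \geq 0$ we have $|Y_{n}| = Y_{n}$, and since $\Theta$ preserves $P$ each $Y_{n}$ has the distribution of $a_{\ell}^{(k)} - a_{0}^{(k)}$, so $\mathbb{E}|Y_{n}| = \mathbb{E}(a_{\ell}^{(k)} - a_{0}^{(k)}) = k\ell\,\mathbb{E}b$, which is finite by the finite-mean hypothesis. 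Finally, rewriting
\[
\frac{1}{N}\sum_{n=1}^{N}Y_{n} = \sum_{z=0}^{k-1}\sum_{j=1}^{\ell}\frac{1}{N}\sum_{n=1}^{N}b_{n+z+j}
\]
and applying the classical strong law of large numbers to each of these finitely many inner averages shows $\frac{1}{N}\sum_{n=1}^{N}Y_{n} \to k\ell\,\mathbb{E}b$ almost surely, whence $\limsup_{N\to\infty}\frac{1}{N}\sum_{n=1}^{N}Y_{n} + \mathbb{E}Y_{n} = 2k\ell\,\mathbb{E}b$ almost surely and (iii) holds with, say, $L = 1 + 2k\ell\,\mathbb{E}b$.

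Lemma~\ref{L:key2single} will then furnish a measure one set on which, for every $z \in S^{1}$, $\frac{1}{N}\sum_{n=1}^{N}z^{Y_{n}} - \mathbb{E}\frac{1}{N}\sum_{n=1}^{N}z^{Y_{n}} \to 0$. It remains to identify the two pieces: $z^{Y_{n}} = z^{a_{\ell+n}^{(k)} - a_{n}^{(k)}}$ by definition of $Y_{n}$, while $\mathbb{E}z^{Y_{n}} = \mathbb{E}z^{a_{\ell}^{(k)} - a_{0}^{(k)}}$ for every $n$ because $\Theta$ preserves $P$, so the averaged expectation equals the constant $\mathbb{E}z^{a_{\ell}^{(k)} - a_{0}^{(k)}}$ for every $N$. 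Substituting gives precisely the stated conclusion.

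The only step requiring genuine care is the verification of hypothesis (iii): the $Y_{n}$ are not independent (they are $(k+\ell-1)$-dependent block sums of the iid $b_{m}$), so a strong law cannot be invoked for them directly, but decomposing $Y_{n}$ into a fixed linear combination of shifted copies of the $b_{m}$ reduces the Cesàro convergence of $\frac{1}{N}\sum_{n=1}^{N}Y_{n}$ to finitely many instances of the classical SLLN — and this is exactly where the finite-mean assumption on $b$ enters. Everything else is bookkeeping with the identities of Lemma~\ref{L:remarks}.
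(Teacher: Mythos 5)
Your proposal is correct and follows essentially the same route as the paper: identify $Y_{n}=(a_{\ell}^{(k)}-a_{0}^{(k)})\circ\Theta^{n}$ via Lemma \ref{L:remarks}, verify the dependence-count and Ces\`aro hypotheses of Lemma \ref{L:key2single} (the latter by reducing to finitely many shifted instances of the classical strong law), and finish by shift-invariance of $\mathbb{E}z^{Y_{n}}$. The only differences from the paper's argument are cosmetic sharpenings of the constants ($k+\ell-1$ in place of $k+\ell$, and $2k\ell\,\mathbb{E}b$ in place of the paper's cruder bound obtained by discarding $-a_{0}^{(k)}\circ\Theta^{n}$), which are immaterial.
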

\begin{proof}
Let
\[
Y_{n} = a_{n+\ell}^{(k)} - a_{n}^{(k)} = \big{(}a_{\ell}^{(k)} - a_{0}^{(k)}\big{)} \circ \Theta^{n}
\]
which depends only on coordinates $b_{n+1}, \ldots, b_{n+\ell+k}$ and observe that
\[
\#\{ m : Y_{n} \text{ depends on } b_{m} \} \leq \ell + k \quad\quad \text{ for each $n$}
\]
and
\[
\#\{ n : Y_{n} \text{ depends on } b_{m} \} \leq \ell + k \quad\quad \text{ for each $m$}.
\]
Now for each $t$
\[
\lim_{N \to \infty} \frac{1}{N}\sum_{n=1}^{N}a_{t} \circ \Theta^{n}
= \sum_{j=1}^{t} \lim_{N \to \infty} \frac{1}{N}\sum_{n=1}^{N}b_{j} \circ \Theta^{n}
= \sum_{j=1}^{t} \mathbb{E}b = t \mathbb{E}b
\]
almost surely by the (usual) Strong Law of Large Numbers.  Likewise $\mathbb{E}a_{t} = t \mathbb{E}b$.  Therefore
\begin{align*}
\lim_{N \to \infty} \frac{1}{N}\sum_{n=1}^{N}a_{\ell}^{(k)}\circ\Theta^{n} + \mathbb{E}a_{\ell}^{(k)}\circ \Theta^{n}
&= \sum_{t=0}^{k-1} \lim_{N \to \infty} \frac{1}{N}\sum_{n=1}^{N}a_{\ell+t}\circ\Theta^{n} + \mathbb{E}a_{\ell+t}\\
&= \sum_{t=0}^{k-1} 2(\ell + t)\mathbb{E}b = (2k\ell + k(k+1))\mathbb{E}b
\end{align*}
almost surely (take the union of the measure one sets for each of the countably many values of $t \in \mathbb{N}$).
Hence by Lemma \ref{L:key2single} there exists a measure one set on which for every $z \in S^{1}$,
\[
\lim_{N \to \infty} \frac{1}{N}\sum_{n=1}^{N}z^{Y_{n}} - \mathbb{E}\frac{1}{N}\sum_{n=1}^{N}z^{Y_{n}} = 0.
\]
To conclude the proof, note that
\[
\mathbb{E}z^{Y_{n}}
= \mathbb{E}z^{a_{n+\ell}^{(k)} - a_{n}^{(k)}}
= \mathbb{E}z^{a_{\ell}^{(k)} - a_{0}^{(k)}} \circ \Theta^{n}
= \mathbb{E}z^{a_{\ell}^{(k)} - a_{0}^{(k)}}.
\]
\end{proof}

\begin{proposition}\label{P:weakpowerergodic}
Let $\{ a_{n} \}$ be a stochastic sequence generated by a random variable $b$ with finite mean.  Let $\ell$ and $q$ be fixed positive integers.  Set, for each $k \in \mathbb{N}$ with $k > \ell + q$,
\[
Y_{n}^{(k,\ell,q)} = a_{n+\ell+q}^{(k)} - a_{n+q}^{(k)} - a_{n+\ell}^{(k)} + a_{n}^{(k)}.
\]
Then there is a measure one set on which for every $z \in S^{1}$,
\[
\lim_{N \to \infty} \sup_{\ell+q < k \leq N} \Big{|}\frac{1}{N}\sum_{n=1}^{N}z^{Y_{n}^{(k,\ell,q)}} - \mathbb{E}z^{Y_{0}^{(k,\ell,q)}}\Big{|} = 0.
\]
\end{proposition}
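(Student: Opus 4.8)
The plan is to recognise each $Y_{n}^{(k,\ell,q)}$ as the $n$-th shift of a single random variable that depends on only boundedly many of the $b_{m}$ --- boundedly uniformly in $k$ --- and then to apply Lemma~\ref{L:key2} essentially verbatim, exactly as Lemma~\ref{L:key2single} was applied in the proof of Proposition~\ref{P:totallyergodic}. The first step is to rewrite $Y_{n}^{(k,\ell,q)}$. Grouping the four terms as $(a_{n+\ell+q}^{(k)}-a_{n+q}^{(k)})-(a_{n+\ell}^{(k)}-a_{n}^{(k)})$, using Lemma~\ref{L:remarks}(i) to trade the superscript $k$ for $\ell$ in each bracket, and then Lemma~\ref{L:remarks}(iv) to factor out the shift, one obtains, for every $k\geq 1$,
\[
Y_{n}^{(k,\ell,q)} = G\circ\Theta^{n+k}-G\circ\Theta^{n}, \qquad G := a_{q}^{(\ell)}-a_{0}^{(\ell)} = \sum_{z=0}^{\ell-1}\bigl(a_{q+z}-a_{z}\bigr),
\]
and in particular $Y_{n}^{(k,\ell,q)}=Y_{0}^{(k,\ell,q)}\circ\Theta^{n}$. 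Here $G$ is a nonnegative $\mathbb{Z}$-valued random variable depending only on $b_{1},\dots,b_{q+\ell-1}$, with $\mathbb{E}G=\ell q\,\mathbb{E}b<\infty$ because $b$ has finite mean; since the formula makes sense for all $k\geq 1$, I would apply Lemma~\ref{L:key2} to the full family $\{Y_{n}^{(k,\ell,q)}\}_{n,k}$ and discard the values $k\leq\ell+q$ only at the end.

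Next I would verify the hypotheses of Lemma~\ref{L:key2} for this family (with $\ell,q$ fixed). Since $G\circ\Theta^{n+k}$ depends only on $b_{n+k+1},\dots,b_{n+k+q+\ell-1}$ and $G\circ\Theta^{n}$ only on $b_{n+1},\dots,b_{n+q+\ell-1}$, hypotheses (i) and (ii) hold with $K=B=2(q+\ell)$, uniformly in $k$ and $n$. For hypothesis (iii), from $|Y_{n}^{(k,\ell,q)}|\leq G\circ\Theta^{n+k}+G\circ\Theta^{n}$ and $\mathbb{E}(G\circ\Theta^{m})=\mathbb{E}G$ one gets $\frac1N\sum_{n=1}^{N}\mathbb{E}|Y_{n}^{(k,\ell,q)}|\leq 2\mathbb{E}G$ and
\[
\frac1N\sum_{n=1}^{N}|Y_{n}^{(k,\ell,q)}| \leq \frac1N\sum_{n=1}^{N}G\circ\Theta^{n}+\frac1N\sum_{n=k+1}^{N+k}G\circ\Theta^{n} \leq \frac1N\sum_{n=1}^{N}G\circ\Theta^{n}+\frac1N\sum_{n=1}^{2N}G\circ\Theta^{n},
\]
the last inequality using $G\geq 0$ and $k\leq N$. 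As in the proof of Proposition~\ref{P:totallyergodic}, the usual Strong Law of Large Numbers applied to each $b_{j}$ occurring in $G$ gives $\frac1M\sum_{n=1}^{M}G\circ\Theta^{n}\to\mathbb{E}G$ almost surely, whence $\limsup_{N}\sup_{k\leq N}\frac1N\sum_{n=1}^{N}|Y_{n}^{(k,\ell,q)}|\leq 3\mathbb{E}G$ almost surely and (iii) holds with $L=5\ell q\,\mathbb{E}b$ (which is $\geq 1$ since $\mathbb{E}b\geq 1$).

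Lemma~\ref{L:key2} then produces a measure-one set on which, for every $z\in S^{1}$,
\[
\lim_{N\to\infty}\sup_{k\leq N}\Bigl|\frac1N\sum_{n=1}^{N}z^{Y_{n}^{(k,\ell,q)}}-\mathbb{E}\frac1N\sum_{n=1}^{N}z^{Y_{n}^{(k,\ell,q)}}\Bigr|=0.
\]
To finish, since $Y_{n}^{(k,\ell,q)}=Y_{0}^{(k,\ell,q)}\circ\Theta^{n}$ and $\Theta$ preserves $P$, we have $\mathbb{E}z^{Y_{n}^{(k,\ell,q)}}=\mathbb{E}z^{Y_{0}^{(k,\ell,q)}}$ for every $n$, so $\mathbb{E}\frac1N\sum_{n=1}^{N}z^{Y_{n}^{(k,\ell,q)}}=\mathbb{E}z^{Y_{0}^{(k,\ell,q)}}$; restricting the supremum to $\ell+q<k\leq N$ (which only shrinks it) gives the stated conclusion.

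I expect the only real obstacle to be hypothesis (iii): one must bound the translated Birkhoff-type averages $\frac1N\sum_{n=k+1}^{N+k}G\circ\Theta^{n}$ \emph{uniformly} over $k\leq N$, and this is precisely where $G\geq 0$ together with $N+k\leq 2N$ is used to dominate everything by $\frac1N\sum_{n=1}^{2N}G\circ\Theta^{n}$, which has an almost-sure limit. The coordinate counting for (i)--(ii), the algebraic rewriting via Lemma~\ref{L:remarks}, and the identification of the centering constant $\mathbb{E}z^{Y_{0}^{(k,\ell,q)}}$ are all routine.
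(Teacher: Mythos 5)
Your proposal is correct and follows essentially the same route as the paper's proof: the same rewriting $Y_{n}^{(k,\ell,q)}=(a_{q}^{(\ell)}-a_{0}^{(\ell)})\circ\Theta^{n+k}-(a_{q}^{(\ell)}-a_{0}^{(\ell)})\circ\Theta^{n}$ via Lemma~\ref{L:remarks}, the same coordinate count giving $K=B=2(q+\ell)$, the same $\frac{1}{N}\sum_{n=1}^{2N}$ domination plus the Strong Law to verify hypothesis (iii) of Lemma~\ref{L:key2}, and the same shift-invariance argument to identify the centering constant $\mathbb{E}z^{Y_{0}^{(k,\ell,q)}}$. The only (immaterial) differences are that you compute $\mathbb{E}G=\ell q\,\mathbb{E}b$ exactly where the paper uses the cruder bound $a_{q}^{(\ell)}\leq\ell a_{q+\ell}$, and that you run the lemma over all $k\leq N$ before restricting to $k>\ell+q$.
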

\begin{proof}
Observe that
\[
a_{n+\ell+q}^{(k)} - a_{n+q}^{(k)} = a_{n+q+k}^{(\ell)} - a_{n+q}^{(\ell)}
\]
and so
\begin{align*}
a_{n+\ell+q}^{(k)} - a_{n+q}^{(k)} - a_{n+\ell}^{(k)} + a_{n}^{(k)}
&= a_{n+q+k}^{(\ell)} - a_{n+q}^{(\ell)} - a_{n+k}^{(\ell)} + a_{n}^{(\ell)} \\
&= a_{n+q+k}^{(\ell)} - a_{n+k}^{(\ell)} - a_{n+q}^{(\ell)} + a_{n}^{(\ell)} \\
&= (a_{q}^{(\ell)} - a_{0}^{(\ell)}) \circ \Theta^{n+k} - (a_{q}^{(\ell)} - a_{0}^{(\ell)}) \circ \Theta^{n}
\end{align*}
and therefore depends only on the coordinates $b_{n+1}, \ldots, b_{n+q+\ell}$ and $b_{n+k+1},\ldots,b_{n+k+q+\ell}$.
Therefore for each $n,k$,
\[
\# \{ m : Y_{n}^{(k,\ell,q)} \text{ depends on } b_{m} \} \leq 2(q + \ell)
\]
and for each $m,k$,
\[
\# \{ n : Y_{n}^{(k,\ell,q)} \text{ depends on } b_{m} \} \leq 2(q + \ell).
\]
Observe further that
\begin{align*}
|Y_{n}^{(k,\ell,q)}| &\leq (a_{q}^{(\ell)} - a_{0}^{(\ell)}) \circ \Theta^{n+k} + (a_{q}^{(\ell)} - a_{0}^{(\ell)}) \circ \Theta^{n} \\
&\leq a_{q}^{(\ell)} \circ \Theta^{n+k} + a_{q}^{(\ell)} \circ \Theta^{n} \\
&\leq \ell a_{q + \ell} \circ \Theta^{n+k} + \ell a_{q + \ell} \circ \Theta^{n}
\end{align*}
and that
\[
\sup_{k \leq N} \frac{1}{N}\sum_{n=1}^{N}b_{t} \circ \Theta^{n+k} \leq \frac{1}{N}\sum_{n=1}^{2N} b_{t} \circ \Theta^{n}
= 2 \frac{1}{2N}\sum_{n=1}^{2N} b_{t} \circ \Theta^{n}
\]
hence
\[
\limsup_{N\to\infty} \sup_{k \leq N} \frac{1}{N}\sum_{n=1}^{N}b_{t}\circ \Theta^{n+k} \leq 2 \mathbb{E}b
\]
almost surely by the Strong Law of Large Numbers.  Therefore
\begin{align*}
\limsup_{N\to\infty} \sup_{\ell + q < k \leq N} \frac{1}{N}\sum_{n=1}^{N} Y_{n}^{(k,\ell,q)}
&\leq \ell \sum_{t=1}^{q+\ell} \limsup_{N\to\infty} \sup_{\ell + q < k \leq N} \frac{1}{N}\sum_{n=1}^{N} b_{t}\circ\Theta^{n+k} + b_{t}\circ\Theta^{n} \\
&\leq \ell \sum_{t=1}^{q+\ell} 4 \mathbb{E}b = 4 \ell (q + \ell) \mathbb{E}b
\end{align*}
almost surely.  Hence by Lemma \ref{L:key2},
\[
\lim_{N \to \infty} \sup_{\ell + q < k \leq N} \Big{|}\frac{1}{N}\sum_{n=1}^{N}z^{Y_{n}^{(k,\ell,q)}} - \mathbb{E}\frac{1}{N}\sum_{n=1}^{N}z^{Y_{n}^{(k,\ell,q)}}\Big{|} = 0
\]
and the claim then follows from the fact that $Y_{n}^{(k,\ell,q)} = Y_{0}^{(k,\ell,q)} \circ \Theta^{n}$ by Lemma \ref{L:remarks}.
\end{proof}

\begin{remark}\label{R:blah}
The proofs of the above results carry over to the case when $a_{n} = b_{1} + \cdots + b_{n}$ is replaced by $a_{N,n} = b_{N,1} + \cdots + b_{N,n}$ where $b_{N,j}$ is an iid sequence for each $N$ with distribution $b_{N}$.  The only requirement for the above proofs is that $\mathbb{E}b_{N}$ be uniformly bounded over $N$.

However, if $\mathbb{E}b_{N}$ is bounded by a polynomial in $N$ then the statements remain true, provided we increase the $R$ used in Lemma \ref{L:key}.  In essence, we have a Strong Law for Triangular Arrays with a mild requirement on the means of the $b_{N}$.
\end{remark}

\subsection{The van der Corput Inequality}
A fundamental inequality in ergodic theory is the \textbf{van der Corput Inequality} (\cite{KN} page 25):
\begin{lemma*}
For any complex numbers $c_{n}$ such that $|c_{n}|\leq 1$ and any $N,L\in\mathbb{N}$,
\[
\big{|}\frac{1}{N}\sum_{n=0}^{N-1}c_{n}\big{|}^{2} \leq \frac{N+L}{N}\Big{(}\frac{1}{L} + 2Re\Big{[}\frac{1}{L}\sum_{\ell=1}^{L-1}\frac{L-\ell}{L}\frac{1}{N}\sum_{n=0}^{N-\ell-1}c_{n+\ell}\overline{c_{n}}\Big{]}\Big{)}.
\]
\end{lemma*}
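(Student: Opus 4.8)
The plan is to establish this by the classical ``averaging over shifts, then Cauchy--Schwarz'' argument. First I would extend the sequence by setting $c_n = 0$ for all $n \notin \{0,1,\dots,N-1\}$, so that every sum below may be taken over $\mathbb{Z}$ without changing its value. The starting point is the identity
\[
L\sum_{n=0}^{N-1}c_n = \sum_{n\in\mathbb{Z}}\sum_{\ell=0}^{L-1}c_{n-\ell},
\]
together with the observation that the inner sum $\sum_{\ell=0}^{L-1}c_{n-\ell}$ can be nonzero only for $n \in \{0,1,\dots,N+L-2\}$, a set of $N+L-1$ integers. Applying the Cauchy--Schwarz inequality to this sum of $N+L-1$ terms then yields
\[
L^2\Big|\sum_{n=0}^{N-1}c_n\Big|^2 \le (N+L-1)\sum_{n\in\mathbb{Z}}\Big|\sum_{\ell=0}^{L-1}c_{n-\ell}\Big|^2.
\]

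Next I would expand the square as $\big|\sum_{\ell}c_{n-\ell}\big|^2 = \sum_{\ell,\ell'}c_{n-\ell}\overline{c_{n-\ell'}}$, sum over $n\in\mathbb{Z}$, and reindex via $m = n-\ell'$ and $j = \ell - \ell'$. For each fixed $j \in \{-(L-1),\dots,L-1\}$ there are exactly $L - |j|$ pairs $(\ell,\ell')$ with $\ell - \ell' = j$, so the double sum collapses to
\[
\sum_{n\in\mathbb{Z}}\Big|\sum_{\ell=0}^{L-1}c_{n-\ell}\Big|^2 = L\sum_{m\in\mathbb{Z}}|c_m|^2 + \sum_{j=1}^{L-1}(L-j)\Big(\sum_{m}c_{m-j}\overline{c_m} + \sum_{m}c_{m+j}\overline{c_m}\Big).
\]
Since $\sum_m c_{m-j}\overline{c_m} = \overline{\sum_m c_{m+j}\overline{c_m}}$, the bracketed pair equals $2\,\mathrm{Re}\sum_m c_{m+j}\overline{c_m}$, and the support condition on $c$ truncates this to $2\,\mathrm{Re}\sum_{n=0}^{N-j-1}c_{n+j}\overline{c_n}$. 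Finally I would bound $\sum_m|c_m|^2 \le N$ (there are $N$ terms, each of modulus at most $1$) and observe that the resulting right-hand bracket $LN + 2\sum_{j=1}^{L-1}(L-j)\,\mathrm{Re}\sum_{n=0}^{N-j-1}c_{n+j}\overline{c_n}$ is nonnegative, since it dominates $\sum_{n}\big|\sum_\ell c_{n-\ell}\big|^2 \ge 0$; this makes it legitimate to enlarge the prefactor $N+L-1$ to $N+L$. Dividing through by $L^2N^2$ and regrouping then produces exactly the claimed inequality (with the summation index renamed $\ell$).

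The whole argument is elementary and I do not anticipate any substantive obstacle; the only points demanding a little care are the reindexing and the multiplicity count in the second step — verifying that the coefficient attached to the difference $j$ is $L-|j|$ and that the support of $c$ truncates the inner sums to $n \le N - j - 1$ — together with the small observation that replacing $N+L-1$ by $N+L$ is valid precisely because the bracketed quantity is nonnegative.
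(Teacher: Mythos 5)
Your argument is correct and complete. The paper itself states this van der Corput inequality as a classical fact without proof, so there is no in-paper argument to compare against; what you have written is the standard derivation (zero-extend the sequence, average over the $L$ shifts, apply Cauchy--Schwarz over the $N+L-1$ possibly nonzero positions, expand the square and collect the off-diagonal terms with multiplicity $L-|j|$). The two points you flag as needing care are exactly the right ones, and your handling of them is sound --- in particular the observation that the bracketed quantity remains nonnegative after replacing $\sum_m |c_m|^2$ by $N$ (since it still dominates $\sum_n\big|\sum_\ell c_{n-\ell}\big|^2 \geq 0$) is precisely what licenses enlarging the prefactor from $N+L-1$ to $N+L$.
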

The van der Corput Inequality is now a classical tool in ergodic theory and is used, among other places, in \cite{adba00}, \cite{bergelson87}, \cite{fw96}, \cite{hostkra} and \cite{lesigne93}.

We need several consequences of this basic inequality.  The first is straightforward and the proof is left to the reader:
\begin{lemma}\label{L:vdc2}
For complex numbers $c_{n}$ with $|c_{n}| \leq 1$ and any $N,L \in \mathbb{N}$,
\[
\big{|}\frac{1}{N}\sum_{n=1}^{N}c_{n}\big{|}^{2} \leq \frac{N+L}{N} \Big{(}\frac{1}{L} + 2 Re \Big{[}\frac{1}{L}\sum_{\ell=1}^{L-1} \frac{L-\ell}{L}\frac{1}{N}\sum_{n=1}^{N}c_{n+\ell}\overline{c_{n}} \Big{]}\Big{)} + \frac{2L(N+L)}{N^{2}}.
\]
\end{lemma}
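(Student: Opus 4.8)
The statement to prove is Lemma~\ref{L:vdc2}, a variant of the van der Corput inequality where the outer sum runs from $n=1$ to $N$ (rather than $n=0$ to $N-1$) and the inner correlation sums also run over the full range $n=1$ to $N$ (rather than stopping at $N-\ell-1$), at the cost of an extra additive error term $\frac{2L(N+L)}{N^2}$.

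\bigskip

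The plan is to reduce directly to the stated van der Corput Inequality by two bookkeeping manipulations. First I would re-index: given complex numbers $c_n$ with $|c_n|\le 1$ for $n=1,\dots,N$, define $c_n' = c_{n+1}$ for $n=0,\dots,N-1$, so that $\frac{1}{N}\sum_{n=1}^{N}c_n = \frac{1}{N}\sum_{n=0}^{N-1}c_n'$ and the stated inequality applies verbatim to the $c_n'$. This turns the left side into the desired form and produces on the right the quantity $\frac{1}{N}\sum_{n=0}^{N-\ell-1}c_{n+\ell}'\overline{c_n'} = \frac{1}{N}\sum_{n=1}^{N-\ell}c_{n+\ell}\overline{c_n}$. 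Second, I would compare this truncated correlation sum $\frac{1}{N}\sum_{n=1}^{N-\ell}c_{n+\ell}\overline{c_n}$ with the full one $\frac{1}{N}\sum_{n=1}^{N}c_{n+\ell}\overline{c_n}$ appearing in the target: the difference is $\frac{1}{N}\sum_{n=N-\ell+1}^{N}c_{n+\ell}\overline{c_n}$, whose modulus is at most $\frac{\ell}{N}$ since each term has modulus at most $1$ (here one either extends $c_n$ to be $0$ beyond $N$, or simply notes the trivial bound). Replacing the truncated sum by the full sum in the van der Corput right-hand side thus incurs an error of at most $2\cdot\frac{N+L}{N}\cdot\frac{1}{L}\sum_{\ell=1}^{L-1}\frac{L-\ell}{L}\cdot\frac{\ell}{N}$.

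\bigskip

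The final step is to bound this accumulated error by $\frac{2L(N+L)}{N^2}$. Since $\frac{L-\ell}{L}\le 1$ and $\sum_{\ell=1}^{L-1}\ell \le L^2/2$ (indeed $\le L^2$), we get $\frac{2(N+L)}{NL}\cdot\frac{1}{N}\sum_{\ell=1}^{L-1}\ell \le \frac{2(N+L)}{N^2 L}\cdot L^2 = \frac{2L(N+L)}{N^2}$, which is exactly the claimed correction term. Taking real parts throughout (the error estimate bounds the modulus, hence a fortiori the real part) and collecting terms yields the displayed inequality.

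\bigskip

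I expect no serious obstacle here: the whole argument is a reindexing plus a triangle-inequality truncation estimate, and the constant $2L(N+L)/N^2$ is chosen generously enough to absorb the crude bounds. The only point requiring a little care is keeping track of which sums are truncated at $N-\ell$ versus $N$ when passing between the stated van der Corput Inequality and the desired form, and making sure the factor $\frac{N+L}{N}$ multiplying the correction is correctly propagated; but these are routine. One could also prove Lemma~\ref{L:vdc2} from scratch by the standard van der Corput computation (expanding $\big|\frac{1}{NL}\sum_{n}\sum_{\ell=0}^{L-1}c_{n+\ell}\big|^2$ and using Cauchy--Schwarz), which would make all boundary terms explicit, but the reduction above is shorter and suffices.
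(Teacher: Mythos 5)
Your proof is correct and follows essentially the same route as the paper's: apply the stated van der Corput inequality, account for the index shift, and absorb the difference between the truncated correlation sums $\frac{1}{N}\sum_{n=1}^{N-\ell}c_{n+\ell}\overline{c_{n}}$ and the full ones into the error term via the trivial bound $\ell/N \leq L/N$ on the tail. The only difference is cosmetic: you make the reindexing explicit, whereas the paper performs it silently when splitting off $\frac{1}{N}\sum_{n=N-\ell+1}^{N}c_{n+\ell}\overline{c_{n}}$.
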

%\begin{proof}
%Beginning with the van der Corput Inequality,
%\begin{align*}
%\big{|}\frac{1}{N}\sum_{n=1}^{N}c_{n}\big{|}^{2}
%&\leq \frac{N+L}{N}\Big{(}\frac{1}{L} + 2Re\Big{[}\frac{1}{L}\sum_{\ell=1}^{L-1}\frac{L-\ell}{L}\frac{1}{N}\sum_{n=0}^{N-\ell-1}c_{n+\ell}\overline{c_{n}}\Big{]}\Big{)} \\
%&= \frac{N+L}{N} \Big{(} \frac{1}{L} + 2 Re \Big{[}\frac{1}{L}\sum_{\ell=1}^{L-1} \frac{L-\ell}{L}\frac{1}{N}\sum_{n=1}^{N}c_{n+\ell}\overline{c_{n}} \Big{]}\Big{)}  \\
%&\quad\quad\quad\quad\quad\quad - \frac{N+L}{N} 2 Re \Big{[}\frac{1}{L}\sum_{\ell=1}^{L-1}\frac{L-\ell}{L} \frac{1}{N}\sum_{n=N-\ell+1}^{N} c_{n+\ell}\overline{c_{n}} \Big{]} \\
%&\leq \frac{N+L}{N} \Big{(} \frac{1}{L} + 2 Re \Big{[} \frac{1}{L}\sum_{\ell=1}^{L-1} \frac{L-\ell}{L}\frac{1}{N}\sum_{n=1}^{N}c_{n+\ell}\overline{c_{n}} \Big{]} \Big{)} 
%+ \frac{N+L}{N} 2 \frac{1}{L}\sum_{\ell=1}^{L-1} \frac{L-\ell}{L}\frac{L}{N}.
%\end{align*}
%\end{proof}

\begin{lemma}\label{L:vdc3}
For any sequence $\{a_{n}\}$, any $z \in S^{1}$ and any fixed $k,L \in \mathbb{N}$,
\[
\limsup_{N\to\infty} \big{|}\frac{1}{N}\sum_{n=1}^{N}z^{a_{n}^{(k)}}\big{|}^{2}
\leq \frac{1}{L} + 2\frac{1}{L}\sum_{\ell=1}^{L-1}\frac{L-\ell}{L}\limsup_{N\to\infty}Re\Big{[}\frac{1}{N}\sum_{n=1}^{N}z^{a_{n+\ell}^{(k)} - a_{n}^{(k)}}\Big{]}.
\]
\end{lemma}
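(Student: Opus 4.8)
The plan is to apply Lemma~\ref{L:vdc2} with $c_{n} = z^{a_{n}^{(k)}}$, which is legitimate since $|z^{a_{n}^{(k)}}| = 1$. With this choice we have $c_{n+\ell}\overline{c_{n}} = z^{a_{n+\ell}^{(k)}}\overline{z^{a_{n}^{(k)}}} = z^{a_{n+\ell}^{(k)} - a_{n}^{(k)}}$, so Lemma~\ref{L:vdc2} immediately gives
\[
\big{|}\frac{1}{N}\sum_{n=1}^{N}z^{a_{n}^{(k)}}\big{|}^{2} \leq \frac{N+L}{N}\Big{(}\frac{1}{L} + 2Re\Big{[}\frac{1}{L}\sum_{\ell=1}^{L-1}\frac{L-\ell}{L}\frac{1}{N}\sum_{n=1}^{N}z^{a_{n+\ell}^{(k)} - a_{n}^{(k)}}\Big{]}\Big{)} + \frac{2L(N+L)}{N^{2}}.
\]

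Next I would take $\limsup_{N\to\infty}$ of both sides, holding $k$ and $L$ fixed. Since $L$ is fixed, $\frac{N+L}{N} \to 1$ and $\frac{2L(N+L)}{N^{2}} \to 0$ as $N \to \infty$, so the prefactor disappears and the error term vanishes. The only subtlety is pushing the $\limsup$ through the finite outer sum over $\ell$ and through $Re[\,\cdot\,]$: because the sum over $\ell$ runs over the fixed finite range $1,\dots,L-1$ and each coefficient $\frac{L-\ell}{L}$ is a fixed nonnegative constant, we have $\limsup_{N} \sum_{\ell} \frac{L-\ell}{L} Re[\cdots] \leq \sum_{\ell} \frac{L-\ell}{L} \limsup_{N} Re[\cdots]$ by subadditivity of $\limsup$ together with nonnegativity of the weights, and $Re[\,\cdot\,]$ commutes with the real-valued $\limsup$. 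This yields exactly the claimed inequality.

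This is essentially a bookkeeping exercise rather than a substantive one, so there is no real obstacle; the only point requiring a moment's care is the interchange of $\limsup$ with the finite sum, where one must note that $\limsup$ is subadditive (not additive) and that this goes the right way because we are bounding the left side from above and the weights $\frac{L-\ell}{L} \geq 0$. One should also remark that, unlike the van der Corput inequality itself, here $\limsup_{N}$ of the real-part averages need not exist as a genuine limit, which is precisely why the statement is phrased with $\limsup$ on both sides and why the subadditivity direction is the one we need.
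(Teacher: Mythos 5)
Your proof is correct and follows exactly the paper's route: the paper's entire proof of this lemma is the single line ``Apply Lemma~\ref{L:vdc2} to $c_{n} = z^{a_{n}^{(k)}}$,'' and you do precisely that, additionally spelling out the $\limsup$ bookkeeping (the vanishing of $\frac{2L(N+L)}{N^{2}}$, the prefactor $\frac{N+L}{N}\to 1$, and the subadditivity of $\limsup$ over the finite sum with nonnegative weights) that the paper leaves implicit.
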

\begin{proof}
Apply Lemma \ref{L:vdc2} to $c_{n} = z^{a_{n}^{(k)}}$.
\end{proof}

Our next consequence of the van der Corput Inequality is similar to the fourth moment method of Blum and Cogburn \cite{blumcogburn}.
\begin{lemma}\label{L:vdc4}
For any sequence $\{a_{n}\}$, any $z \in S^{1}$ and any $L,Q \in \mathbb{N}$,
\begin{align*}
&\limsup_{N\to\infty}\sup_{k\leq N}\big{|}\frac{1}{N}\sum_{n=1}^{N}z^{a_{n}^{(k)}}\big{|}^{4} \\
&\quad\leq \frac{1}{L^{2}} + \frac{4}{L} + \frac{4}{L}\sum_{\ell = 1}^{L} \Big{(}\frac{1}{Q} + 2\frac{1}{Q}\sum_{q=1}^{Q-1}\frac{Q-q}{Q}\limsup_{N\to\infty}\sup_{k\leq N} Re\Big{[}\frac{1}{N}\sum_{n=1}^{N}z^{a_{n+q+\ell}^{(k)} - a_{n+q}^{(k)} - a_{n+\ell}^{(k)} + a_{n}^{(k)}} \Big{]}\Big{)}.
\end{align*}
\end{lemma}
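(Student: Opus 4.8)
The plan is to apply the modified van der Corput inequality of Lemma \ref{L:vdc2} twice: once to the exponential sums $\frac1N\sum_{n=1}^N z^{a_n^{(k)}}$ with lag parameter $L$, and then to each of the resulting differenced sums with lag parameter $Q$. Throughout, fix $z\in S^1$ and $L,Q\in\mathbb N$ and write $S_N(k)=\frac1N\sum_{n=1}^N z^{a_n^{(k)}}$.

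First I would apply Lemma \ref{L:vdc2} to $c_n=z^{a_n^{(k)}}$ (which has $|c_n|=1$) with parameter $L$, obtaining for every $N,k$
\[
|S_N(k)|^2 \le \frac{N+L}{N}\,U_N(k) + \frac{2L(N+L)}{N^2}, \qquad U_N(k):=\frac1L + \frac2L\sum_{\ell=1}^{L-1}\frac{L-\ell}{L}\,Re\Big[\frac1N\sum_{n=1}^N z^{a_{n+\ell}^{(k)}-a_n^{(k)}}\Big].
\]
Since each $Re[\cdots]\in[-1,1]$ and $\sum_{\ell=1}^{L-1}\frac{L-\ell}{L}=\frac{L-1}{2}$ we have $|U_N(k)|\le 1$, and since $|S_N(k)|^2\ge 0$ the right-hand side above is nonnegative. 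Squaring it, using $|U_N(k)|\le 1$ together with $\frac{N+L}{N}\to 1$ and $\frac{2L(N+L)}{N^2}\to 0$ (uniformly in $k$, as these factors do not involve $k$), gives
\[
\limsup_{N\to\infty}\sup_{k\le N}|S_N(k)|^4 \le \limsup_{N\to\infty}\sup_{k\le N}U_N(k)^2.
\]

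Next I would expand $U_N(k)^2 = \frac1{L^2} + \frac2L V_N(k) + V_N(k)^2$ where $V_N(k)=\frac2L\sum_{\ell=1}^{L-1}\frac{L-\ell}{L}Re[\cdots]$. The cross term satisfies $\frac2L V_N(k)\le\frac2L\le\frac4L$ (using $\sum_{\ell=1}^{L-1}\frac{L-\ell}{L}=\frac{L-1}{2}$ and $Re[\cdots]\le1$), while Cauchy--Schwarz with the weights $\frac{L-\ell}{L}$, whose sum is $\le\frac L2$, gives $V_N(k)^2\le\frac2L\sum_{\ell=1}^{L-1}\big|\frac1N\sum_{n=1}^N z^{a_{n+\ell}^{(k)}-a_n^{(k)}}\big|^2\le\frac2L\sum_{\ell=1}^{L}\big|\frac1N\sum_{n=1}^N z^{a_{n+\ell}^{(k)}-a_n^{(k)}}\big|^2$. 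Hence
\[
\limsup_{N\to\infty}\sup_{k\le N}|S_N(k)|^4 \le \frac1{L^2} + \frac4L + \frac2L\sum_{\ell=1}^{L}\limsup_{N\to\infty}\sup_{k\le N}\Big|\frac1N\sum_{n=1}^N z^{a_{n+\ell}^{(k)}-a_n^{(k)}}\Big|^2.
\]
For each $\ell$ I would now apply Lemma \ref{L:vdc2} a second time, to $c_n=z^{a_{n+\ell}^{(k)}-a_n^{(k)}}$ with parameter $Q$; since $c_{n+q}\overline{c_n}=z^{\,a_{n+q+\ell}^{(k)}-a_{n+q}^{(k)}-a_{n+\ell}^{(k)}+a_n^{(k)}}$ and, repeating verbatim the $\limsup_N\sup_{k\le N}$ argument of the previous step, this bounds the $\ell$-th term by $\frac1Q + \frac2Q\sum_{q=1}^{Q-1}\frac{Q-q}{Q}\limsup_{N\to\infty}\sup_{k\le N}Re\big[\frac1N\sum_{n=1}^N z^{a_{n+q+\ell}^{(k)}-a_{n+q}^{(k)}-a_{n+\ell}^{(k)}+a_n^{(k)}}\big]$. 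Each such bracketed quantity is $\ge 0$, being an upper bound for a nonnegative $\limsup$, so the coefficient $\frac2L$ may be replaced by $\frac4L$, yielding exactly the stated inequality.

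The only steps requiring (routine) care are the commutations of $\limsup_N$ with $\sup_{k\le N}$, handled each time by the uniform bounds $|U_N(k)|\le 1$ and its analogue after the second application of Lemma \ref{L:vdc2}, together with the fact that the $k$-dependence in the van der Corput error terms is trivial; and the harmless inflations of $\frac2L$ to $\frac4L$ and of $\sum_{\ell=1}^{L-1}$ to $\sum_{\ell=1}^{L}$, legitimate because the quantities being enlarged are nonnegative. There is no substantive obstacle: all of the content is in the two invocations of Lemma \ref{L:vdc2}.
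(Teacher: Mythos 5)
Your argument is correct and follows essentially the same route as the paper: one application of Lemma \ref{L:vdc2}, squaring and a Cauchy--Schwarz step (the paper's Sublemma \ref{SL:1}) to pass to an average of squared differenced sums, the routine interchange of $\sup_{k\le N}$ with the averages (Sublemma \ref{SL:2}), and a second application of Lemma \ref{L:vdc2}. The only cosmetic difference is that you expand $(\tfrac1L+V_N(k))^2$ directly and later inflate $\tfrac2L$ to $\tfrac4L$, whereas the paper first bounds the real part by a modulus before squaring; the bookkeeping is equivalent.
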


\begin{proof}
Using Lemma \ref{L:vdc2}, for any sequence of complex numbers $c_{n}$ with $|c_{n}| \leq 1$
\begin{align*}
\big{|}\frac{1}{N}\sum_{n=1}^{N}c_{n}\big{|}^{2}
&\leq \frac{N+L}{N}\Big{(}\frac{1}{L} + 2 Re \Big{[}\frac{1}{L}\sum_{\ell=1}^{L-1}\frac{L-\ell}{L}\frac{1}{N}\sum_{n=1}^{N} c_{n+\ell}\overline{c_{n}}\Big{]}\Big{)} + \frac{2L(N+L)}{N^{2}} \\
&\leq \frac{N+L}{N} \Big{(}\frac{1}{L} + 2 \big{|}\frac{1}{L}\sum_{\ell=1}^{L-1}\frac{L-\ell}{L}\frac{1}{N}\sum_{n=1}^{N}c_{n+\ell}\overline{c_{n}}\big{|}\Big{)} + \frac{2L(N+L)}{N^{2}} 
\end{align*}
and therefore (since $\big{|}\frac{1}{L}\sum_{\ell=1}^{L-1}\frac{L-\ell}{L}\frac{1}{N}\sum_{n=1}^{N}c_{n+\ell}\overline{c_{n}}\big{|} \leq \frac{1}{L}\sum_{\ell=1}^{L-1}\frac{L-\ell}{L}\frac{1}{N}\sum_{n=1}^{N} 1 \leq 1$)
\begin{align*}
\big{|}\frac{1}{N}&\sum_{n=1}^{N}c_{n}\big{|}^{4} \\
&\leq \frac{(N+L)^{2}}{N^{2}}\Big{(}\frac{1}{L^{2}} + \frac{4}{L} + 4 \big{|}\frac{1}{L}\sum_{\ell=1}^{L-1}\frac{L-\ell}{L}\frac{1}{N}\sum_{n=1}^{N}c_{n+\ell}\overline{c_{n}}\big{|}^{2}\Big{)} \\
&\quad\quad\quad\quad + \frac{12L(N+L)^{2}}{N^{3}} + \frac{4L^{2}(N+L)^{2}}{N^{4}} \\
&\leq \frac{(N+L)^{2}}{N^{2}}\Big{(}\frac{1}{L^{2}} + \frac{4}{L} + \frac{4}{L}\sum_{\ell=1}^{L}\big{(}\frac{L - \ell}{L}\big{)}^{2}\big{|}\frac{1}{N}\sum_{n=1}^{N}c_{n+\ell}\overline{c_{n}}\big{|}^{2}\Big{)} \\
&\quad\quad\quad\quad + \frac{12L(N+L)^{2}}{N^{3}} + \frac{4L^{2}(N+L)^{2}}{N^{4}} \\
&\leq \frac{(N+L)^{2}}{N^{2}}\Big{(}\frac{1}{L^{2}} + \frac{4}{L} + \frac{4}{L}\sum_{\ell=1}^{L}\big{|}\frac{1}{N}\sum_{n=1}^{N}c_{n+\ell}\overline{c_{n}}\big{|}^{2}\Big{)} + \frac{12L(N+L)^{2}}{N^{3}} + \frac{4L^{2}(N+L)^{2}}{N^{4}}
\end{align*}
where the second inequality is the Cauchy-Schwarz inequality applied as follows:
\[
\Big{|}\frac{1}{L}\sum_{\ell=1}^{L} a_{\ell}\Big{|}^{2} = \Big{|}\sum_{\ell=1}^{L} \frac{a_{\ell}}{L} \Big{|}^{2} \leq \sum_{\ell=1}^{L}\Big{|}\frac{a_{\ell}}{L}\Big{|}^{2} \sum_{\ell=1}^{L} 1^{2} = \frac{1}{L^{2}}\sum_{\ell=1}^{L}\big{|}a_{\ell}\big{|}^{2} L = \frac{1}{L}\sum_{\ell=1}^{L}\big{|}a_{\ell}\big{|}^{2}
\]
Therefore
\[
\limsup_{N\to\infty} \sup_{k \leq N} \big{|}\frac{1}{N}\sum_{n=1}^{N}z^{a_{n}^{(k)}}\big{|}^{4}
\leq \frac{1}{L^{2}} + \frac{4}{L} + \frac{4}{L}\sum_{\ell = 1}^{L}\limsup_{N\to\infty} \sup_{k \leq N} \big{|}\frac{1}{N}\sum_{n=1}^{N}z^{a_{n+\ell}^{(k)} - a_{n}^{(k)}}\big{|}^{2}
\]
since the supremum of an average is bounded by the average of the supremums (and that limits of finite sums interchange).  Applying Lemma \ref{L:vdc2} a second time,
\begin{align*}
\limsup_{N\to\infty} \sup_{k \leq N} &\big{|}\frac{1}{N}\sum_{n=1}^{N}z^{a_{n+\ell}^{(k)} - a_{n}^{(k)}}\big{|}^{2} \\
&\leq \frac{1}{Q} + 2 \frac{1}{Q}\sum_{q=1}^{Q-1}\frac{Q-q}{Q} \limsup_{N\to\infty}\sup_{k \leq N} Re\Big{[} \frac{1}{N}\sum_{n=1}^{N} z^{a_{n+q+\ell}^{(k)} - a_{n+\ell}^{(k)} - a_{n+q}^{(k)} + a_{n}^{(k)}}\Big{]}.
\end{align*}
\end{proof}

\subsection{Ergodicity Properties of Stochastic Sequences}

\begin{theorem}\label{T:ergodic}
Let $b$ be an aperiodic integer-valued random variable with finite mean.  
Then almost every sequence stochastically generated by $b$ is ergodic with respect to any ergodic transformation.
\end{theorem}

\begin{remark*}
Theorem \ref{T:ergodic} is a special case of the general result of Lema\'{n}czyk, Lesigne, Parreau, Voln\'{y} and Wierdl \cite{leman} on the ergodicity of sequences obtained from measure-preserving transformations.  We include a brief proof to illustrate the techniques that will be used in the proof of Theorem \ref{T:P} (which does not follow from their work).

Specifically, they show that if $\tau$ is an ergodic measure-preserving transformation and $F$ is a measurable integer-valued function then for a.e.~$\omega$, the sequence $k_{n}(\omega) = \sum_{j=1}^{n} F(\tau^{j}(\omega))$ is ergodic with respect to every ergodic transformation provided that $F$ satisfies a cohomology condition (which holds when $F$ is taken to be our function $b$).
\end{remark*}

\begin{proof}
The measure one set will be the intersection of the measure one sets from Proposition \ref{P:ergodic} for each fixed $\ell$ (countably many $\ell$).  By Proposition \ref{P:ergodic} for every $z \in S^{1}$ and each fixed $\ell \in \mathbb{N}$,
$\lim_{N \to \infty} \frac{1}{N}\sum_{n=1}^{N}z^{a_{n+\ell} - a_{n}} = \mathbb{E}z^{a_{\ell}}$.
Observe that
$\mathbb{E}z^{a_{\ell}} = \mathbb{E}z^{b_{1} + \cdots + b_{\ell}} = \big{(}\mathbb{E}z^{b}\big{)}^{\ell}$.
Now by Lemma \ref{L:vdc3} (with $k = 1$) we know that for every $z \in S^{1}$ and $L \in \mathbb{N}$,
\[
\limsup_{N\to\infty} \big{|}\frac{1}{N}\sum_{n=1}^{N}z^{a_{n}}\big{|}^{2}
\leq \frac{1}{L} + 2\frac{1}{L}\sum_{\ell=1}^{L-1}\frac{L-\ell}{L}\limsup_{N\to\infty}Re\Big{[}\frac{1}{N}\sum_{n=1}^{N}z^{a_{n+\ell} - a_{n}}\Big{]}
\]
and this means that for all $L \in \mathbb{N}$,
\begin{align*}
\limsup_{N\to\infty} \big{|}\frac{1}{N}\sum_{n=1}^{N}z^{a_{n}}\big{|}^{2}
&\leq \frac{1}{L} + 2\frac{1}{L}\sum_{\ell=1}^{L-1}\frac{L-\ell}{L} Re \big{(}\mathbb{E}z^{b}\big{)}^{\ell}
= \big{|}\frac{1}{L}\sum_{\ell=1}^{L}  \big{(}\mathbb{E}z^{b}\big{)}^{\ell} \big{|}^{2}.
\end{align*}

 Since $|z^{b}| \leq 1$ we have that $\mathbb{E}z^{b} = 1$ with equality if and only if $z^{b} = 1$ almost surely.  But $b$ is aperiodic so this can happen if and only if $z = 1$ (irrational $z$ this cannot happen and rational $z$ would require $b$ be periodic).  Therefore, for $z \ne 1$ (taking $L \to \infty$),
$\lim_{N \to \infty} \frac{1}{N}\sum_{n=1}^{N}z^{a_{n}} = 0$.
Let $\sigma$ be a spectral measure for an ergodic transformation so $\sigma(\{ 1 \}) = 0$.  By Dominated Convergence,
$\lim_{N \to \infty} \int \big{|}\frac{1}{N}\sum_{n=1}^{N}z^{a_{n}}\big{|}^{2} d\sigma(z) = \sigma(\{ 1 \}) = 0$.
\end{proof}

\begin{theorem}\label{T:totallyergodic}
Let $b$ be an aperiodic integer-valued random variable with finite mean.  
Then almost every sequence stochastically generated by $b$ is totally ergodic with respect to any totally ergodic transformation.
\end{theorem}

\begin{remark*}
Theorem \ref{T:totallyergodic} is also a consequence of the result of Lema\'{n}czyk, Lesigne, Parreau, Voln\'{y} and Wierdl \cite{leman} but we include the proof since it is actually also part of the proof of Theorem \ref{T:P} (which does not follow from their work).
\end{remark*}

\begin{proof}
Proceeding as in the previous theorem, the measure one set will be that from Proposition \ref{P:totallyergodic} intersected over all $\ell$ and on that set we have for each $z \in S^{1}$ and $k,\ell \in \mathbb{N}$,
\begin{align*}
\lim_{N \to \infty} \frac{1}{N}\sum_{n=1}^{N}z^{a_{n+\ell}^{(k)} - a_{n}^{(k)}} = \mathbb{E}z^{a_{\ell}^{(k)} - a_{0}^{(k)}}.
\end{align*}
Now by Lemma \ref{L:vdc3} we have that
\begin{align*}
\limsup_{N\to\infty} \big{|}\frac{1}{N}\sum_{n=1}^{N}z^{a_{n}^{(k)}}\big{|}^{2}
&\leq \frac{1}{L} + 2\frac{1}{L}\sum_{\ell=1}^{L-1}\frac{L-\ell}{L}\limsup_{N\to\infty}Re\Big{[}\frac{1}{N}\sum_{n=1}^{N}z^{a_{n+\ell}^{(k)} - a_{n}^{(k)}}\Big{]} \\
&= \frac{1}{L} + 2\frac{1}{L}\sum_{\ell=1}^{L-1}\frac{L-\ell}{L}Re \mathbb{E}z^{a_{\ell}^{(k)} - a_{0}^{(k)}}.
\end{align*}
Observe that for $\ell > k$
\begin{align*}
a_{\ell}^{(k)} - a_{0}^{(k)} = k a_{\ell} + a_{0}^{(k)} \circ \Theta^{\ell} - a_{0}^{(k)}
&= a_{0}^{(k)} \circ \Theta^{\ell} + k (a_{\ell} - a_{k}) + ka_{k} - a_{0}^{(k)} \\
&= a_{0}^{(k)} \circ \Theta^{\ell} + k a_{\ell - k} \circ \Theta^{k} + (k a_{k} - a_{0}^{(k)})
\end{align*}
and each of the three terms above is independent so for $\ell > k$,
\[
\mathbb{E}z^{a_{\ell}^{(k)} - a_{0}^{(k)}} = \mathbb{E}z^{a_{0}^{(k)}} \mathbb{E}z^{k a_{\ell - k}} \mathbb{E}z^{k a_{k} - a_{0}^{(k)}}
= \big{(}\mathbb{E}z^{kb}\big{)}^{\ell - k} \big{(}\mathbb{E}z^{a_{0}^{(k)}}\big{)}^{2}
\]
since $a_{0}^{(k)} = b_{1} + 2b_{2} + \cdots + (k - 1)b_{k-1}$ and $k a_{k} - a_{0}^{(k)} = b_{k-1} + 2b_{k-2} + \cdots + (k-1)b_{1}$ hence they have the same expectation.
Therefore for $L > k$,
\begin{align*}
\frac{1}{L} &+ 2\frac{1}{L}\sum_{\ell=1}^{L-1}\frac{L-\ell}{L}Re \big{[}\mathbb{E}z^{a_{\ell}^{(k)} - a_{0}^{(k)}}\big{]} \\
&= \frac{1}{L} + 2 \frac{1}{L}\sum_{\ell=k+1}^{L-1}\frac{L-\ell}{L}Re\big{[}\mathbb{E}z^{a_{\ell}^{(k)} - a_{0}^{(k)}}\big{]} + 2 \frac{1}{L}\sum_{\ell=1}^{k}\frac{L-\ell}{L}Re\mathbb{E}z^{a_{\ell}^{(k)} - a_{0}^{(k)}} \\
&\leq \frac{1}{L} + 2\frac{1}{L}\sum_{\ell=k+1}^{L-1}\frac{L-\ell}{L}Re  \Big{[} \big{(}\mathbb{E}z^{kb}\big{)}^{\ell - k} \big{(}\mathbb{E}z^{a_{0}^{(k)}}\big{)}^{2}\Big{]} + 2 \frac{1}{L}\sum_{\ell=1}^{k}\frac{L-\ell}{L}1 \\
&= \frac{1}{L} + 2 Re \Big{[}\frac{1}{L}\sum_{\ell=1}^{L-k-1}\frac{L-\ell -k}{L}\big{(}\mathbb{E}z^{kb}\big{)}^{\ell} \big{(}\mathbb{E}z^{a_{0}^{(k)}}\big{)}^{2}\Big{]} + 2 \frac{1}{L}\sum_{\ell=1}^{k}\frac{L-\ell}{L} \\
&\leq \frac{1}{L} + 2 Re \Big{[}\frac{1}{L} \sum_{\ell=1}^{L-k-1}\frac{L-\ell}{L}\big{(}\mathbb{E}z^{kb}\big{)}^{\ell} \big{(}\mathbb{E}z^{a_{0}^{(k)}}\big{)}^{2}\Big{]} - 2 Re \Big{[}\frac{1}{L}\sum_{\ell=1}^{L-k-1}\frac{k}{L}\big{(}\mathbb{E}z^{kb}\big{)}^{\ell} \big{(}\mathbb{E}z^{a_{0}^{(k)}}\big{)}^{2}\Big{]} + 2 \frac{k}{L} \\
&\leq \frac{1}{L} + 2 Re \Big{[} \frac{1}{L} \sum_{\ell=1}^{L-1}\frac{L-\ell}{L}\big{(}\mathbb{E}z^{kb}\big{)}^{\ell} \big{(}\mathbb{E}z^{a_{0}^{(k)}}\big{)}^{2} \Big{]} + \frac{6k}{L} \\
&\leq \frac{1}{L} + 2 \frac{1}{L}\sum_{\ell=1}^{L-1}\frac{L-\ell}{L}\big{|}\mathbb{E}z^{kb}\big{|}^{\ell}\mathbb{E}|z^{a_{0}^{(k)}}|^{2} + \frac{6k}{L} \\
&\leq \frac{2}{L}\sum_{\ell=0}^{L-1}\big{|}\mathbb{E}z^{kb}\big{|}^{\ell} + \frac{6k}{L}.
\end{align*}
Therefore, for every $L \in \mathbb{N}$ we have that
\[
\limsup_{N\to\infty} \big{|}\frac{1}{N}\sum_{n=1}^{N}z^{a_{n}^{(k)}}\big{|}^{2} \leq \frac{1}{L}\sum_{\ell=1}^{L}\big{|}\mathbb{E}z^{kb}\big{|}^{\ell} + \frac{6k}{L}.
\]
Since $k$ is fixed, taking $L \to \infty$ sends the final term to zero.  As in the previous theorem, $| \mathbb{E}z^{kb} | = 1$ if and only if $z$ is a root of unity (in fact a $kp^{th}$ root where $p$ is the period of $b$).  Hence, taking $L \to \infty$ we obtain that for $z \in S^{1}$, $z$ not a root of unity,
$\lim_{N \to \infty} \frac{1}{N}\sum_{n=1}^{N}z^{a_{n}^{(k)}} = 0$.

Now let $\sigma$ be a spectral measure for a totally ergodic transformation.  Then, since if there were some mass on a $t^{th}$ root of unity then $T^{t}$ would not be ergodic, 
$\sigma(\{ \text{roots of unity} \}) = 0$.  Hence by Dominated Convergence,
\[
\limsup_{N\to\infty} \int \big{|}\frac{1}{N}\sum_{n=1}^{N}z^{a_{n}^{(k)}}\big{|}^{2} d\sigma(z) \leq \sigma(\{ \text{roots of unity} \}) = 0.
\]
\end{proof}

\begin{theorem}\label{T:P}
Let $b$ be an aperiodic integer-valued random variable with finite mean.  
Then almost every sequence stochastically generated by $b$ is weakly power ergodic with respect to any weakly power ergodic transformation.
\end{theorem}
\begin{proof}
The measure one set will be the intersection of the measure one sets provided by Proposition \ref{P:weakpowerergodic} for each pair $q,\ell \in \mathbb{N}$ (countably many measure one sets).
Assume for the moment that $b$ is not constant.
Set, for each $k \in \mathbb{N}$ with $k > \ell + q$,
\[
Y_{n}^{(k,\ell,q)} = a_{n+\ell+q}^{(k)} - a_{n+q}^{(k)} - a_{n+\ell}^{(k)} + a_{n}^{(k)}.
\]
On the measure one set chosen, by Proposition \ref{P:weakpowerergodic}, for every $\ell,q$ and every $z \in S^{1}$,
\[
\lim_{N \to \infty} \sup_{\ell + q < k \leq N} \Big{|}\frac{1}{N}\sum_{n=1}^{N}z^{Y_{n}^{(k,\ell,q)}} - \mathbb{E}z^{Y_{0}^{(k,\ell,q)}}\Big{|} = 0
\]
and for $k \leq \ell + q$ the proof of the previous theorem has already established this, hence we may take the supremum of $k \leq N$.
Now provided $\ell < q$
\begin{align*}
Y_{0}^{(k,\ell,q)} &= a_{\ell+q}^{(k)} - a_{q}^{(k)} - a_{\ell}^{(k)} + a_{0}^{(k)} \\
&= (a_{q}^{(\ell)} - a_{0}^{(\ell)}) \circ \Theta^{k} - (a_{q}^{(\ell)} - a_{0}^{(\ell)}) \\
&= \big{(}a_{0}^{(\ell)} \circ \Theta^{q} + \ell a_{q - \ell} \circ \Theta^{\ell} + (\ell a_{\ell} - a_{0}^{(\ell)})\big{)} \circ \Theta^{k}
- \big{(}a_{0}^{(\ell)} \circ \Theta^{q} + \ell a_{q - \ell} \circ \Theta^{\ell} + (\ell a_{\ell} - a_{0}^{(\ell)})\big{)} 
\end{align*}
which are six independent terms and therefore
\[
\mathbb{E}z^{Y_{0}^{(k,\ell,q)}} = \big{|}\mathbb{E}z^{\ell b}\big{|}^{2(q - \ell)} \big{|}\mathbb{E}z^{a_{0}^{(\ell)}}\big{|}^{4}
\]
as in the previous theorem.  Hence for $\ell < q$ we have that
\[
\limsup_{N \to \infty} \sup_{k \leq N} \big{|}\frac{1}{N}\sum_{n=1}^{N}z^{Y_{n}^{(k,\ell,q)}}\big{|}
\leq \big{|}\mathbb{E}z^{\ell b}\big{|}^{2(q - \ell)} \big{|}\mathbb{E}z^{a_{0}^{(\ell)}}\big{|}^{4}
\leq \big{|}\mathbb{E}z^{\ell b}\big{|}^{2(q - \ell)}.
\]
By Lemma \ref{L:vdc4} we have that, for $Q > L$,
\begin{align*}
&\limsup_{N\to\infty}\sup_{k\leq N}\big{|}\frac{1}{N}\sum_{n=1}^{N}z^{a_{n}^{(k)}}\big{|}^{4} \\
&\quad\leq \frac{1}{L^{2}} + \frac{4}{L} + \frac{4}{L}\sum_{\ell = 1}^{L} \Big{(}\frac{1}{Q} + 2\frac{1}{Q}\sum_{q=1}^{Q-1}\frac{Q-q}{Q}\limsup_{N\to\infty}\sup_{k\leq N} Re\Big{[}\frac{1}{N}\sum_{n=1}^{N}z^{a_{n+q+\ell}^{(k)} - a_{n+q}^{(k)} - a_{n+\ell}^{(k)} + a_{n}^{(k)}} \Big{]}\Big{)} \\
&\quad\leq \frac{1}{L^{2}} + \frac{4}{L} + \frac{4}{L}\sum_{\ell = 1}^{L} \Big{[}\frac{1}{Q} + 2\frac{1}{Q}\sum_{q=\ell+1}^{Q-1}\frac{Q-q}{Q} \big{|}\mathbb{E}z^{\ell b}\big{|}^{2(q - \ell)} + 2\frac{1}{Q}\sum_{q=1}^{\ell}\frac{Q-q}{Q}1 \Big{]} \\
&\quad\leq \frac{1}{L^{2}} + \frac{4}{L} + \frac{4}{L}\sum_{\ell=1}^{L}\Big{[}\frac{1}{Q} + 2\frac{1}{Q}\sum_{q=1}^{Q-\ell-1}\frac{Q-q-\ell}{Q}\big{|}\mathbb{E}z^{\ell b}\big{|}^{2q} + 2 \frac{\ell}{Q}\Big{]} \\
&\quad\leq  \frac{1}{L^{2}} + \frac{4}{L} + \frac{4}{L}\sum_{\ell=1}^{L}\Big{[}\frac{1}{Q} + 2\frac{1}{Q}\sum_{q=1}^{Q-1}\frac{Q-q}{Q}\big{|}\mathbb{E}z^{\ell b}\big{|}^{2q} \Big{]} + 2 \frac{4}{L}\sum_{\ell=1}^{L} \frac{\ell}{Q} \\
&\quad= \frac{1}{L^{2}} + \frac{4}{L} + \frac{4}{L}\sum_{\ell=1}^{L} \Big{|}\frac{1}{Q}\sum_{q=1}^{Q} \big{|}\mathbb{E}z^{\ell b}\big{|}^{2q} \Big{|}^{2} + \frac{8}{L}\frac{L(L+1)}{2Q}.
\end{align*}
Fix $\epsilon > 0$ and choose $L$ such that $\frac{1}{L^{2}} + \frac{4}{L} < \epsilon$.  Choose $Q$ at least large enough that $\frac{4(L+1)}{Q} < \epsilon$.  Then
\[
\limsup_{N\to\infty}\sup_{k\leq N}\big{|}\frac{1}{N}\sum_{n=1}^{N}z^{a_{n}^{(k)}}\big{|}^{4}
\leq \frac{4}{L}\sum_{\ell=1}^{L} \Big{|}\frac{1}{Q}\sum_{q=1}^{Q} \big{|}\mathbb{E}z^{\ell b}\big{|}^{2q} \Big{|}^{2} + 2\epsilon.
\]
Now, as before, $|\mathbb{E}z^{\ell b}| = 1$ can only occur when $z$ is a root of unity.  Hence, when $z$ is not a root of unity, for each of the finite number of choices for $\ell \leq L$ there is large enough $Q$ such that $\frac{1}{Q}\sum_{q=1}^{Q} \big{|}\mathbb{E}z^{\ell b}\big{|}^{2q} < \epsilon$.  Therefore for $z \in S^{1}$, $z$ not a root of unity, we have that
\[
\limsup_{N\to\infty}\sup_{k\leq N}\big{|}\frac{1}{N}\sum_{n=1}^{N}z^{a_{n}^{(k)}}\big{|}^{4}
\leq \frac{4}{L}\sum_{\ell=1}^{L} \epsilon^{2} + 2\epsilon
\]
and therefore for $z$ not a root of unity (since $\epsilon$ was arbitrary),
\[
\lim_{N\to\infty}\sup_{k\leq N}\big{|}\frac{1}{N}\sum_{n=1}^{N}z^{a_{n}^{(k)}}\big{|}^{2} = 0.
\]
Hence by Dominated Convergence, for any spectral measure $\sigma$ for a weakly power ergodic transformation,
\[
\lim_{N \to \infty} \sup_{k \leq N} \int \big{|}\frac{1}{N}\sum_{n=1}^{N}z^{a_{n}^{(k)}}\big{|}^{2} d\sigma(z) = 0
\]
as desired.

The case when $b$ is constant corresponds to $a_{n} = n$ and therefore $a_{n}^{(k)} = n + (n+1) + \cdots + n + k -1 = kn + \frac{1}{2}k(k-1)$.  Hence for $\sigma$ a weakly power ergodic spectral measure,
\[
\lim_{N \to \infty} \sup_{k \leq N} \int \big{|}\frac{1}{N}\sum_{n=1}^{N}z^{a_{n}^{(k)}}\big{|}^{2} d\sigma(z)
= \lim_{N \to \infty} \sup_{k \leq N} \int \big{|}\frac{1}{N}\sum_{n=1}^{N}z^{nk}\big{|}^{2} d\sigma(z) = 0
\]
by the definition of weak power ergodicity.
\end{proof}

\begin{remark}\label{R:blah2}
The proofs carry over to stochastic dynamical sequences $a_{N,n}$ generated by $\{ b_{N} \}$ provided $\mathbb{E}b_{N}$ are bounded by some polynomial in $N$ as per Remark \ref{R:blah}.  Stochastic dynamical sequences satisfying this condition are likewise totally (respectively, weakly) power ergodic with respect to totally (respectively, weakly) power ergodic transformations
\end{remark}

\section{Mixing on Stochastic Staircase Transformations}

\begin{theorem}\label{T:rsmix}
Let $b$ be an aperiodic positive-integer-valued random variable with finite mean and $\{ r_{n} \}$ be a sequence of positive integers with $r_{n} \to \infty$ such that almost every stochastic staircase transformation generated by $b$ with cut sequence $\{ r_{n} \}$ is defined on a finite measure space.  Then almost every stochastic staircase transformation generated by $b$ with cut sequence $\{ r_{n} \}$ is mixing.
\end{theorem}
\begin{proof}
Let $\{ a_{n} \}$ be a stochastic sequence generated by $b$ such that $\{ a_{n} \}$ is in the measure one set of such sequences for Theorems \ref{T:ergodic}, \ref{T:totallyergodic} and \ref{T:P}.  Let $T$ be the stochastic staircase transformation with spacer sequence $\{ s_{n,j} \}_{\{r_{n}\}}$ where $s_{n,j} = a_{j} + x_{n}$ where $\{ x_{n} \}$ is a sequence of nonnegative integers such that $x_{n} \geq -\inf_{1\leq j< r_{n}}a_{j}$.

Since $T$ is a rank-one transformation, $T$ is ergodic.  Then by Theorem \ref{T:ergodic}, $\{a_{n}\}$ is ergodic with respect to $T$.  Therefore, for any $\chi \in L^{2}(X,\mu)$ with $\int \chi~d\mu = 0$,
\begin{align*}
\int \big{|} \frac{1}{r_{n}}\sum_{j=0}^{r_{n}-1}\chi\circ T^{-s_{n,j}} \big{|}~d\mu
&= \int \big{|} \frac{1}{r_{n}}\sum_{j=0}^{r_{n}-1}\chi\circ T^{-a_{j}-x_{n}} \big{|}~d\mu \\
&= \int \big{|} \frac{1}{r_{n}}\sum_{j=0}^{r_{n}-1}\chi\circ T^{-a_{j}} \big{|}~d\mu \to 0
\end{align*}
using that $T$ is measure-preserving.
Hence by Corollary \ref{C:r1te}, $T$ is totally ergodic and so by Theorem \ref{T:totallyergodic}, $\{a_{n}\}$ is then totally ergodic with respect to $T$.  Continuing this process, by Theorem \ref{T:r1pe}, $T$ is weakly power ergodic and then Theorem \ref{T:P} tells us that $\{ a_{n} \}$ is weakly power ergodic with respect to $T$.

By Theorem \ref{T:rgmix} this means that if $T$ has restricted growth then $T$ is mixing.  The case when $T$ does not have restricted growth will occupy the rest of the proof.  

First we rewrite the condition from Theorem \ref{T:mix1} that we need to show as:
\[
\lim_{n\to\infty} \sup_{k < r_{n}} 
\sup \big{\{}
 \int \sum_{q=1}^{Q}\frac{d_{q}}{r_{n}-k} \big{|}\frac{1}{d_{q}}\sum_{j=1}^{d_{q}}
z^{s_{n,j + \sum_{i=1}^{q-1}d_{i}}^{(k - \alpha_{q})}} \big{|} d\sigma(z)
: \sum_{q=1}^{Q} d_{q} = r_{n}-k \text{ and } \alpha_{q} \geq 0 \big{\}} = 0
\]
where we are assuming that $s_{n,j} \leq s_{n,j+1}$ so the partitions $\Gamma_{q}$ are just blocks of length $d_{q}$ (this is because the $s_{n,j}$ are increasing with $j$ and the blocks collect nearby values of $s_{n,j}$ together).  The case when the spacers are not increasing can be handled similarly (with additional notational difficulty).

By Lemma \ref{L:weird} (following the proof), it is enough to show that
\[
\lim_{n\to\infty} \sup_{k < r_{n}} 
\sup \big{\{}
 \int \sum_{q=1}^{Q}\frac{d_{q}}{r_{n}-k} \big{|}\frac{1}{d_{q}}\sum_{j=1}^{d_{q}}
z^{s_{n,j + \sum_{i=1}^{q-1}d_{i}}^{(k - \alpha_{q})}} \big{|}^{2} d\sigma(z)
: \sum_{q=1}^{Q} d_{q} = r_{n}-k \text{ and } \alpha_{q} \geq 0 \big{\}} = 0
\]
and so applying the van der Corput Inequality (Lemma \ref{L:vdc2}) it is enough to show that
\begin{align*}
\inf_{L} \lim_{n\to\infty} \sup_{k < r_{n}} 
\sup_{d_{q},\alpha_{q}}
& \int \sum_{q=1}^{Q}\frac{d_{q}}{r_{n}-k} \Big{(}\frac{1}{L} + \\
 &2Re\Big{[}\frac{1}{L}\sum_{\ell=1}^{L-1}\frac{L-\ell}{L} \frac{1}{d_{q}}\sum_{j=1}^{d_{q}}
z^{s_{n,j + \ell + \sum_{i=1}^{q-1}d_{i}}^{(k - \alpha_{q})} - s_{n,j + \sum_{i=1}^{q-1}d_{i}}^{(k - \alpha_{q})}}\Big{]} \Big{)} d\sigma(z) = 0.
\end{align*}
Now in the case when $b$ is constant, we know that
\[
s_{n,j + \ell + \sum_{i=1}^{q-1}d_{i}}^{(k - \alpha_{q})} - s_{n,j + \sum_{i=1}^{q-1}d_{i}}^{(k - \alpha_{q})} = \ell (k - \alpha_{q})
\]
and this condition becomes
\[
\inf_{L} \lim_{n\to\infty} \sup_{k < r_{n}} 
\sup_{d_{q},\alpha_{q}}
 \int \sum_{q=1}^{Q}\frac{d_{q}}{r_{n}-k} \Big{[}\frac{1}{L} + 2Re\frac{1}{L}\sum_{\ell=1}^{L-1}\frac{L-\ell}{L} \frac{1}{d_{q}}\sum_{j=1}^{d_{q}}
z^{\ell(k - \alpha_{q})} \Big{]} d\sigma(z) = 0.
\]
In \cite{CS10} it is shown that classical staircase transformations are mixing and the reader is referred to Theorem 4.1 in \cite{CS10} for details.  We assume from here on that $b$ is not constant.
Following the same strategy as in the proof of Lemma \ref{L:vdc4}, we can apply the van der Corput Inequality again and it becomes enough to show that
\[
\inf_{L,M} \lim_{n\to\infty} \sup_{k < r_{n}} 
\sup_{d_{q},\alpha_{q}}
 \int \sum_{q=1}^{Q}\frac{d_{q}}{r_{n}-k} \Big{[}\frac{1}{L^{2}} +\frac{4}{L} + \frac{4}{L}\sum_{\ell=1}^{L}\big{|}\frac{1}{M}\sum_{m=1}^{M-1}\frac{M-m}{M} \frac{1}{d_{q}}\sum_{j=1}^{d_{q}}
z^{Y_{j}^{(k,q,\ell,m)}} \big{|} \Big{]} d\sigma(z) = 0
\]
where
\[
Y_{j}^{(k,q,\ell,m)} = s_{n,j + m + \ell + \sum_{i=1}^{q-1}d_{i}}^{(k - \alpha_{q})} - s_{n,j + m + \sum_{i=1}^{q-1}d_{i}}^{(k - \alpha_{q})} - s_{n,j + \ell + \sum_{i=1}^{q-1}d_{i}}^{(k - \alpha_{q})} + s_{n,j + \sum_{i=1}^{q-1}d_{i}}^{(k - \alpha_{q})}.
\]
Now each $Y$ depends only on the $2(\ell + m)$ coordinates numbered $b_{j+\sum d_{i} + 1}, \ldots, b_{j+\sum d_{i} + \ell + m}$ and also $b_{j+\sum d_{i} + k - \alpha_{q} + 1}, \ldots, b_{j+\sum d_{i}+k-\alpha_{q}+\ell+m}$ and so we can apply the same argument as in Proposition \ref{P:weakpowerergodic} making use of Lemma \ref{L:key2} (since the $\alpha$ are small compared to $k$ and therefore do not significantly affect the number of $Y$ that depend on each of the coordinates).  Details are left to the reader since step-by-step the argument is the same as that for proving weak power ergodicity (the main point being that the $\alpha$ do not really change anything and without them the statement is identical to that in the weak power ergodicity proof).
\end{proof}

\begin{lemma}\label{L:weird}
For any sequence of complex numbers $c_{N,n}$ with $|c_{N,n}| \leq 1$ and any $0 \leq w_{N,n} \leq 1$ such that $\sum_{n=1}^{N} w_{N,n} = 1$
\[
\lim_{N \to \infty} \sum_{n=1}^{N} w_{N,n}|c_{N,n}| = 0 \quad\quad\text{if and only if}\quad\quad \lim_{N\to\infty}\sum_{n=1}^{N} w_{N,n}|c_{N,n}|^{2} = 0
\]
\end{lemma}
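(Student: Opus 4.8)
The plan is to reduce this to an elementary comparison of the weighted $\ell^1$ and $\ell^2$ norms of the nonnegative numbers $t_{N,n} := |c_{N,n}| \in [0,1]$ against the probability weights $w_{N,n}$. Write $S_N^{(1)} = \sum_{n=1}^{N} w_{N,n} t_{N,n}$ and $S_N^{(2)} = \sum_{n=1}^{N} w_{N,n} t_{N,n}^{2}$; the two conditions in the statement are exactly $S_N^{(1)} \to 0$ and $S_N^{(2)} \to 0$.

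For the forward direction I would use the bound $|c_{N,n}| \leq 1$: since $0 \leq t_{N,n} \leq 1$ we have $t_{N,n}^{2} \leq t_{N,n}$, so $S_N^{(2)} \leq S_N^{(1)}$, and therefore $S_N^{(1)} \to 0$ forces $S_N^{(2)} \to 0$. For the reverse direction I would apply the Cauchy--Schwarz inequality (equivalently Sublemma \ref{SL:A}) with respect to the probability measure assigning mass $w_{N,n}$ to $n$: writing $t_{N,n} = (w_{N,n}^{1/2} t_{N,n}) \cdot w_{N,n}^{1/2}$ and summing gives $S_N^{(1)} \leq \bigl(\sum_{n} w_{N,n} t_{N,n}^{2}\bigr)^{1/2}\bigl(\sum_{n} w_{N,n}\bigr)^{1/2} = (S_N^{(2)})^{1/2}$, using $\sum_n w_{N,n} = 1$. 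Hence $S_N^{(2)} \to 0$ forces $S_N^{(1)} \to 0$, completing the equivalence.

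There is no genuine obstacle here: the lemma is a soft convexity/interpolation statement and both implications are one line. The only points worth flagging are that the hypothesis $|c_{N,n}| \leq 1$ is precisely what is needed for $t_{N,n}^2 \leq t_{N,n}$ in the easy direction, and that the normalization $\sum_n w_{N,n} = 1$ is what makes the Cauchy--Schwarz estimate collapse to the clean bound $S_N^{(1)} \leq (S_N^{(2)})^{1/2}$. In the intended application the $w_{N,n}$ arise as the block proportions $\tfrac{d_q}{r_n - k}$ and the $c_{N,n}$ as the normalized exponential sums $\tfrac{1}{d_q}\sum_{j} z^{s_{n,j+\cdots}^{(k-\alpha_q)}}$, so this lemma is simply the device that lets one pass freely between the $L^1$-type quantity appearing in Theorem \ref{T:mix1} and the $L^2$-type quantities controlled by the van der Corput estimates and the weak power ergodicity of $\{a_n\}$.
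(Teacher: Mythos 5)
Your proof is correct, and the nontrivial direction is handled by a genuinely different (and slicker) argument than the paper's. The easy implication is identical in both: $|c_{N,n}|\le 1$ gives $|c_{N,n}|^2\le |c_{N,n}|$ termwise. For the converse, you apply Cauchy--Schwarz with respect to the probability weights $w_{N,n}$ to get the clean quantitative bound $\sum_n w_{N,n}|c_{N,n}| \le \bigl(\sum_n w_{N,n}|c_{N,n}|^2\bigr)^{1/2}$, which immediately transfers convergence. The paper instead argues by contradiction with a level-set (Chebyshev-type) decomposition: assuming $\sum_n w_{N,n}|c_{N,n}|\ge\epsilon$ along a subsequence, it shows the set $A_N=\{n:|c_{N,n}|\ge\epsilon/2\}$ carries weight at least $\epsilon/2$, whence $\sum_n w_{N,n}|c_{N,n}|^2\ge\epsilon^3/8$. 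Both arguments are valid; yours yields the sharper and more memorable inequality $S_N^{(1)}\le (S_N^{(2)})^{1/2}$ in one line (and dovetails with Sublemma \ref{SL:A}, which is the unweighted special case $w_{N,n}=1/N$), while the paper's level-set argument is more elementary in that it avoids invoking any named inequality and generalizes readily to settings where one only has lower bounds on large values rather than a bilinear structure. Either proof is acceptable as written.
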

\begin{proof}
The right implies the left by the Cauchy-Schwarz Inequality and the left implies the right since $|c_{N,n}|^{2} \leq |c_{N,n}|$.
\end{proof}

In fact, the proof of Theorem \ref{T:rsmix} (with additional notation, but essentially line for line) also gives:
\begin{theorem}\label{T:rsmix2}
Let $\{ b_{n} \}$ be a sequence of (not necessarily iid) aperiodic random variables with finite mean (not necessarily uniformly bounded over $n$) and $\{ r_{n} \}$ be a sequence of positive integers with $r_{n} \to \infty$ where the $r_{n}$ depend only on $s_{m,j}$ for $m < n$ and $\mathbb{E}b_{n}$ is bounded by some polynomial in $r_{n}$ and such that almost every stochastic staircase transformation generated by $\{ b_{n} \}$ with cut sequence $\{ r_{n} \}$ is defined on a finite measure space.  Then almost every stochastic staircase transformation generated by $\{ b_{n} \}$ with cut sequence $\{ r_{n} \}$ is mixing.
\end{theorem}

\begin{remark}
The reader familiar with probability theory will note that in fact we only need that the $b_{n}$ be permutable (see, for instance, \cite{durrett}), i.e. invariant in distribution under permutations of a finite number of coordinates, for our result and do not need the full power of iid (permutability implies identical distribution and independence when conditioned on the right $\sigma$-algebra).
\end{remark}

\section{Examples of Mixing Transformations}

We conclude the paper by presenting a series of examples of transformations that our results imply are mixing, including the staircase transformations and Ornstein's constructions.

\subsection{Ornstein's Construction}\label{S:ornstein}

We conclude the paper by placing Ornstein's construction of mixing rank-one transformations in the context of our result.  Let $\{x_{n,j}\}$ for $j\in\{0,\ldots,r_{n}-1\}$ be iid uniform random variables on the set $\{-t_{n},\ldots,t_{n}\}$ where $t_{n}$ is small compared to $h_{n}$.  Set $s_{n,j} = 2h_{n-1} + x_{n,j+1} - x_{n,j}$.  The rank-one transformations with $\{s_{n,j}\}_{\{r_{n}\}}$ as spacer sequences with $t_{n} = h_{n-1}$ are Ornstein's original construction (see \cite{Or72}).  His result is that if $r_{n} \to \infty$ sufficiently fast then almost surely such transformations are mixing.
Set $b_{n,j} = s_{n,j} - s_{n,j-1} = x_{n,j+1} - 2x_{n,j} + x_{n,j-1}$.  Then the $b_{n,j}$ are a permutable sequence of aperiodic random variables with finite first moment bounded by $4t_{n}$ since the $x_{n,j}$ are iid uniform.  Thus a rank-one transformation with spacer sequence $\{s_{n,j}\}_{\{r_{n}\}}$ is a stochastic staircase transformation (per our remarks about stochastically generated dynamical sequences).  Our main theorem then implies that such transformations are mixing provided only that $t_{n}$ is bounded by some polynomial in $r_{n}$ and that $r_{n} \to \infty$ (a much more relaxed condition than in Ornstein's paper).
Variables with distribution given by $X - 2Y + Z$ where $X,Y,Z$ are uniform iid are not uncommon in probability theory and are precisely what gives the Ornstein construction. 
If we apply our result directly to $b_{n}$ being a uniform variable we obtain mixing transformations somewhere between random staircases and Ornstein's construction (sums of uniform variables as spacers).

\subsection{Random Polynomial Staircases}

In \cite{CS10} Section 7, it is also shown that \textbf{polynomial staircase transformations}, those with spacer sequence given by $s_{n,j} = p_{n}(j)$ where the $p_{n}$ are polynomials (of bounded degree), are mixing.  The proof makes use of the van der Corput Inequality to induct on the degree of the polynomials (the usual staircases being the base case).  Our work here also uses the van der Corput trick in a different way.
Without going into detail, we remark that it is possible to combine these two approaches and show that \textbf{random polynomial staircase transformations} are mixing.  By this we mean that choosing $s_{n,j} = p_{n}(b_{n,1}, \ldots, b_{n,j})$ to be some polynomial of bounded degree in the coordinates $b_{n,1},\ldots,b_{n,j}$ also leads to mixing.  The idea is to first perform the polynomial induction type step using van der Corput and then apply van der Corput twice more as we did above.  Ornstein's transformations can be viewed as a simple version of this very general construction.

\normalsize
\bibliography{StochasticStaircases}

}
\end{document}